\documentclass[12pt, a4paper]{article}

\usepackage{a4,amsmath,amssymb, amsthm, latexsym, color, graphicx,url}
\usepackage{tikz}
\usetikzlibrary{arrows.meta}
\usetikzlibrary{decorations.markings}
\usetikzlibrary{calc}

\usepackage{authblk}
\usepackage{fullpage}
\usepackage{enumitem}
\usepackage{xcolor}
\bibliographystyle{abbrv}

\newtheorem{theorem}{Theorem}[section]

\newtheorem{lemma}[theorem]{Lemma}
\newtheorem{corollary}[theorem]{Corollary}

\theoremstyle{definition}
\newtheorem{example}[theorem]{Example}
\newtheorem{definition}[theorem]{Definition}
\newtheorem{remark}[theorem]{Remark}

\newcommand{\E}{\overrightarrow{E}}

\title{Digraph-defined external difference families and new circular external difference families}
\author[1]{Sophie Huczynska}
\author[2]{Christopher Jefferson}
\author[3]{Struan McCartney}
\affil[1]{School of Mathematics and Statistics, University of St Andrews, St Andrews, KY16 9SS, Scotland, UK; email: sh70@st-andrews.ac.uk}
\affil[2]{School of Science and Engineering, University of Dundee, Dundee, DD1 4HN, Scotland, UK; email: cjefferson001@dundee.ac.uk}
\affil[3]{School of Mathematics and Statistics, University of St Andrews, St Andrews, KY16 9SS, Scotland, UK; email: sm444@st-andrews.ac.uk}
\date{MSC code: 05B10}
\setcounter{Maxaffil}{0}

\begin{document}
\maketitle

\begin{abstract}
External difference families (EDFs) are combinatorial objects which were introduced in the early 2000s, motivated by information security applications such as the construction of AMD codes.  Various generalizations have since been defined and investigated, in particular strong external difference families (SEDFs) and circular external difference families (CEDFs).  In this paper, we present a framework based on graphs and digraphs which offers a new unified way to view these structures, and leads to natural new research questions. We present constructions and structural results about these digraph-defined EDFs, and we obtain new explicit constructions for infinite families of CEDFs, in particular $(ml^2+1,m,l,1)$-CEDFs.  Our techniques include cyclotomy in finite fields and direct constructions in cyclic groups and direct products of cyclic groups.  We construct the first infinite family of such CEDFs in non-cyclic abelian groups; these have odd values of $m$ and $l$. We also present the first CEDF in a non-abelian group. 
\end{abstract}

\begin{section}{Introduction}
External difference families (EDFs) are combinatorial objects which were introduced in the early 2000s, motivated by information security applications \cite{Cramer, Oga}.  Several variants of these have been introduced subsequently - e.g. strong external difference families (SEDFs) \cite{PatSti1}, circular external difference families (CEDFs) \cite{VeiSti} and others. Such a family consists of a collection $\mathcal{A}$ of disjoint same-size subsets of a group $G$, with the property that the multiset of pairwise differences between elements of certain distinct sets in $\mathcal{A}$ contains every non-identity element of $G$ the same number of times.  This is an external analogue of traditional difference families, which have been studied since the 1930s; for these, the pairwise differences between elements within each set are considered, and their multiset union is required to contain each non-identity element of $G$ equally often. Relaxations of the conditions (e.g. dropping the requirement of equal set-sizes, leading to generalised EDFs (GEDFs) and generalised SEDFs (GSEDFs)) have also been explored \cite{PatSti1}.

Connections between graphs and certain internal/external difference families are well known, and have been explored for example in \cite{BurGio} and \cite{HucPat}. In these papers, the vertices of the graph correspond to the elements of the group $G$ and (labelled) edges between them represent the differences. This provides a link to graph decomposition problems.  In \cite{Bur}, the concept of strong difference family (SDF) is introduced as a collection of multisets such that the multiset union of their internal differences contains every element of $G$ (including the identity) equally often. In \cite{BurGio}, a generalised definition of SDF is introduced, in which the set of internal differences is defined via the edges of a digraph $\Gamma$, and the original definition is retrieved upon taking $\Gamma$ to be complete. In \cite{HucPat}, graph decompositions are considered which correspond directly to certain EDFs in $\mathbb{Z}_n$ via the process of development.

In this paper, we present a different connection between graph theory and external difference families and their generalizations, inspired by - but distinct from - these ideas.  We observe that the sets of such a family may be associated with the vertices of a digraph, with directed edges between precisely those pairs of vertices for which the external (directed) differences between the corresponding sets contribute to the multiset of external differences.  This provides a natural and useful framework in which to view EDFs and their variants, particularly SEDFs and CEDFs.

We note that a somewhat-related idea arises implicitly in \cite{PatSti2} when an initial directed cycle is constructed, then a blow-up construction is applied to it to create a CEDF in a cyclic group.  However our definition does not require any construction-based relationship between the digraph and the sets involved.  The ``graceful directed graphs" of \cite{BloHsu} give examples of our structures in the specific case when the group is cyclic and every set is a singleton set.

Our work was underpinned by search for examples in GAP \cite{GAP}, using a constraint-satisfaction modelling language \cite{Akg} and solver \cite{GenJefMig}.

The paper is structured as follows: we first set up the necessary background and new definitions.  We present constructions and examples of EDFs defined by complete graphs, cycles and complete bipartite graphs (both oriented and undirected). Those defined by oriented cycles (with the natural orientation) correspond to CEDFs, and we present various new results on CEDFs.  In particular, we present a new infinite family of $(ml^2+1,m,l,1)$-CEDFs in non-cyclic abelian groups; these have both $m$ and $l$ odd.  We also give the first example of a CEDF in a non-abelian group.  We end by indicating further research questions emerging from this work.
\end{section}

\begin{section}{Background}\label{sec:background}
Throughout, $G$ will denote a finite group.  Unless otherwise stated, we will write $G$ additively. For subsets $A, B$ of $G$, we define the multiset $\Delta(A,B)=\{a-b:a\in A, b \in B\}$.  All unions are multiset unions unless otherwise stated.

We will frequently work in $\mathbb{Z}_n$, the additive group of integers modulo $n$. We consider its elements as $\{0,1,\ldots, n-1\}$, with the natural order $0<1< \cdots < n-1$.  For $a, b \in \mathbb{Z}_n$, we refer to the set of consecutive elements $\{a,a+1,\ldots,b\}$ as the \emph{interval} $[a,b]$.  

The concept of classical external difference family was first defined in \cite{Oga}, motivated by an application to AMD codes:

\begin{definition}\label{def:EDF}
Let $G$ be a group of order $n$ and let $m>1$.  A family of disjoint $l$-sets $\{A_1, \ldots, A_m\}$ in $G$ is an $(n,m,l,\lambda)$-EDF if the multiset equation
$ \bigcup_{\{i,j: i \neq j\}} \Delta(A_i,A_j)= \lambda(G \setminus \{0\})$ holds.
\end{definition}

The following stronger version of an EDF was first defined in \cite{PatSti1}, corresponding to a stronger security model. (Note this is distinct from the notion of an SDF mentioned previously.)
\begin{definition}\label{def:SEDF}
Let $G$ be a group of order $n$ and let $m>1$.  A family of disjoint $l$-sets $\{A_1, \ldots, A_m\}$ in $G$ is an $(n,m,l,\lambda)$-SEDF if, for each $i$ with $1 \leq i \leq m$, the multiset equation
$ \bigcup_{\{j: j \neq i\}} \Delta(A_i,A_j)= \lambda(G \setminus \{0\})$ holds.
\end{definition} 
Note that a strong EDF will always be an EDF, but the converse need not be true.

Recently, Stinson and Veitch introduced new objects called circular external difference families \cite{VeiSti}. 

\begin{definition}
Let $G$ be a group of order $n$. Suppose $m>1$ and $1 \leq c \leq m-1$.   A family of disjoint $l$-sets $\{A_1, \ldots, A_m\}$ in $G$ is an $(n, m, l,\lambda)$-$c$-CEDF if the following multiset equation holds:
$ \bigcup_{i=0}^{m-1} \Delta(A_{i+c \mod m}, A_i)=\lambda (G \setminus \{0\})$.
\end{definition}
If $c=1$ then the $c$ is sometimes omitted from the notation (as in \cite{PatSti2}).  We note that when $m>3$, CEDFs are not special cases of the standard EDFs; a $(n,3,l,\lambda)$-CEDF is a $(n,3,l,2\lambda)$-EDF.

We present the following observation which clarifies the structure of $c$-CEDFs:
\begin{lemma}\label{lemma:CEDF}
Let $\mathcal{A}=\{A_1, \ldots, A_m\}$ be a disjoint collection of $l$-subsets of a group $G$. Suppose $\mathcal{A}$ is a $c$-CEDF.  Then, if $\gcd(c,m)=d$, the multiset in the definition may be written as a disjoint union of $d$ multiset unions, each involving $m/d$ sets, as follows (where all indices are taken modulo $m$):
$$ \bigcup_{i=0}^{m-1} \Delta(A_{i+c}, A_i)=\bigcup_{j=0}^{d-1} \left(\bigcup_{i=0}^{(m/d-1)} \Delta(A_{(i+1)c+j},A_{ic+j}) \right)=\lambda(G \setminus \{0\})$$
\end{lemma}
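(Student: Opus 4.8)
The plan is to recognize that this identity is, at heart, a reindexing of the defining multiset union, where the genuine content lies in the cycle structure of the ``shift-by-$c$'' permutation on the index set $\mathbb{Z}_m$. The right-hand side is obtained from the left simply by grouping its terms according to these cycles, so the two multiset unions are literally equal term-for-term; the final equality to $\lambda(G\setminus\{0\})$ is then just the hypothesis that $\mathcal{A}$ is a $c$-CEDF.

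First I would note that in the defining union $\bigcup_{i=0}^{m-1}\Delta(A_{i+c},A_i)$ the index $i$ runs over all of $\mathbb{Z}_m$, and each summand is determined by the single value of $i$. Next I would examine the permutation $\sigma\colon i\mapsto i+c$ of $\mathbb{Z}_m$. Its orbits are exactly the cosets of the cyclic subgroup $\langle c\rangle\le\mathbb{Z}_m$; since $\gcd(c,m)=d$ we have $\langle c\rangle=\langle d\rangle$, a subgroup of order $m/d$, so there are precisely $d$ orbits, each of size $m/d$, and $\{0,1,\dots,d-1\}$ may be taken as a transversal.

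I would then partition the index set $\mathbb{Z}_m$ into these $d$ orbits and, using only commutativity and associativity of multiset union, regroup the terms accordingly. Within the orbit through $j$ the indices may be listed as $ic+j$ for $i=0,\dots,m/d-1$; the $\sigma$-successor of $ic+j$ is $(i+1)c+j$, so the summand for this index is $\Delta(A_{(i+1)c+j},A_{ic+j})$, which is exactly the inner union appearing on the right-hand side. The $d$ outer unions have pairwise disjoint index sets, so they constitute a disjoint decomposition of the original multiset.

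The only point requiring care is the purely index-theoretic claim that the orbits of $\sigma$ partition $\mathbb{Z}_m$ and that the enumeration $i\mapsto ic+j$ lists each orbit element exactly once; the latter holds because $c$ has order $m/d$ modulo $m$, so the values $ic+j$ for $0\le i<m/d$ are distinct. Once this is checked, no term of the original union is dropped or repeated, the regrouping is a bookkeeping identity, and the value $\lambda(G\setminus\{0\})$ carries over unchanged. I expect this index bookkeeping to be the main (indeed the only genuine) obstacle, with everything else being formal.
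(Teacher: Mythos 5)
Your proposal is correct and matches the paper's argument: the paper likewise decomposes the index set via the cycle structure of the shift-by-$c$ permutation (citing the fact that the $c$-th power of an $m$-cycle splits into $\gcd(c,m)$ cycles of length $m/\gcd(c,m)$) and notes the rest follows from the definition. You merely prove that permutation fact explicitly via orbits/cosets of $\langle c\rangle=\langle d\rangle$ in $\mathbb{Z}_m$ rather than quoting it, which is a harmless elaboration, not a different route.
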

\begin{proof}
The decomposition of the $m$ sets into $d$ disjoint collections of $m/d$ sets follows from the result on permutations that, if $f$ is a cycle of length $m$, $f^c$ decomposes as the product of $\gcd(c,m)$ disjoint cycles of length $m/\gcd(c,m)$.  The rest of the result follows from the definition.
\end{proof}

The following stronger version of a CEDF has been defined \cite{VeiSti}.

\begin{definition}
Let $G$ be a group of order $n$. Suppose $m \geq 2$ and $1 \leq c \leq m-1$.   A family of disjoint $l$-sets $\{A_1, \ldots, A_m\}$ in $G$ is an $(n, m, l,\lambda)$-$c$-SCEDF if, for each $0 \leq i \leq m-1$, the multiset equation $\Delta(A_{i+c}, A_i)=\lambda (G \setminus \{0\})$ holds (indices taken modulo $m$).
\end{definition}

It is observed in \cite{WuYanFen} that, due to the decomposition structure, any $c$-SCEDF can be viewed as the disjoint union of $1$-SCEDFs; the authors of \cite{WuYanFen} define a $c$-SCEDF to be \emph{trivial} if it is the disjoint union of 2-set $1$-SCEDFs (ie 2-set SEDFs). Using character theory it is proved that any $1$-SCEDF in an abelian group must have $m=2$ sets, and hence that any SCEDF must be trivial.  Note that the use of the word \emph{trivial} in this context does not imply that all subsets have size $1$ (in contrast to the trivial difference set which is taken in the literature to be any singleton set).  For example, in $\mathbb{Z}_{17}$ a trivial $2$-SCEDF is given by sets $A_0=\{1,13,16, 4\}, A_1=\{3,5,14,12\}, A_2=\{9,15,8,2\}$ and $A_3=\{10,11,7,6\}$.
\end{section}

\begin{section}{A new viewpoint}
We now introduce a way of defining external difference families and their generalizations in terms of graphs and digraphs.

Throughout this paper, a graph or digraph $G$ on $m$ vertices will have vertex set $V(G)=\{0,\ldots,m-1\}$.  All graphs and digraphs will be finite and simple (no loops or multiple edges), and they will be labelled.

For a digraph $G$, we denote by $\E(G)$ its set of directed edges.   A graph is said to be \emph{oriented} if at most one of $(i,j)$ or $(j,i)$ is in the directed edge-set for each pair $i \neq j$.  

For an undirected graph $G$ (which will simply be referred to as a graph) we will view it as a digraph, by replacing each undirected edge by a pair of inverse directed edges.   We define the following notation.  Let the edge-set of $G$ as an undirected graph be $E(G)=\{ \{i,j\}: i,j \in V(G) \}$; then we define the \emph{directed edge-set} $\E(G)$ of $G$ to be 
$$ \E(G):=\{ (i,j) \in V(G) \times V(G): \{i,j\} \in E(G)\}.$$

\begin{example}
Consider $K_3$ with $V(K_3)=\{0,1,2\}$ and $E(K_3)=\{ \{0,1\}, \{1,2\}, \{2,0\} \}$; then $\E(K_3)=\{(0,1), (0,2), (1,0), (1,2), (2,0), (2,1)\}$.  
\end{example}

All the digraphs which will be considered in this paper will either be oriented digraphs or undirected graphs viewed as digraphs as above.

We are now ready to define our main object of study.

\begin{definition}\label{def:H-defined EDF}
Let $G$ be a group of order $n$, and let $\mathcal{A}=(A_0,\ldots,A_{m-1})$ be an ordered collection of disjoint subsets of $G$, each of size $l$.  Let $H$ be a labelled digraph on $m$ vertices $\{0,1,\ldots,m-1\}$ and let $\E(H)$ be the set of directed edges of $H$.  Then $\mathcal{A}$ is said to be an $(n,m,l,\lambda; H)$-EDF if the following multiset equation holds:
$$ \bigcup_{(i,j) \in \E(H)} \Delta(A_j,A_i) = \lambda (G \setminus \{0\}).$$
We will call such a structure a \emph{digraph-defined} EDF.  If we wish to emphasise $H$, we will call it an $H$-defined EDF.
\end{definition}

We introduce the following notation and labelling for commonly-used graphs and digraphs which will appear in this paper. To distinguish between an undirected and oriented version of a given underlying graph, we denote the oriented version by a superscript $*$.
\begin{itemize}
\item  Complete graph $K_m$:  $V(K_m)=\{0,1,\ldots, m-1\}$ and  $\E(K_m):=\{ (i,j): 0 \leq i,j\leq m-1, i \neq j \}$. 
\item Oriented complete digraph $K_m^*$ (also known as tournament): $V(K_m^*)=\{0,1,\ldots, m-1\}$; the standard set of directed edges will be  $\E(K_m^*):=\{ (i,j): 0 \leq i <j\leq m-1 \}$ (but we will also consider other orientations).
\item Cycle graph $C_m$:  $V(C_m)=\{0,1,\ldots, m-1\}$ and  $\E(C_m):=\{ (i,j) : j \equiv i+1 \mod m \mbox{ or } i \equiv j+1 \mod m: 0 \leq i, j \leq m-1 \}$. 
\item Oriented cycle $C_m^*$:  $V(C_m^*)=\{0,1,\ldots, m-1\}$; the standard set of directed edges will be  $\E(C_m^*):=\{ (i,i+1 \mod m): 0 \leq i \leq m-1 \}$. 
\item Complete bipartite graph $K_{a,b}$: bipartition $V(K_{a,b})=A \cup B$ where $A=\{0,\ldots,a-1\}$ and $B=\{a,\ldots, a+b-1\}$ and $\E(K_{a,b}):=\{ (i,j): i \in A, j \in B \mbox{ or } i \in B, j \in A\}$. 
\item Oriented complete bipartite digraph $K_{a,b}^*$: bipartition $A \cup B$ where $A=\{0,\ldots,a-1\}$ and $B=\{a,\ldots, a+b-1\}$; the standard set of directed edges will be $\E(K_{a,b}^*):=\{ (i,j): i \in A, j \in B \}$.
\end{itemize}

If $H$ with $V(H)=\{0,\ldots,m-1\}$ is a disjoint union of graphs $H_1 \cup \cdots \cup H_u$, with $|V(H_i)|=h_i$, we will take $V(H)=V(H_1) \cup \cdots \cup V(H_u)$ where $V(H_1)=\{0,\ldots, h_1-1\}$, $V(H_2)= \{h_1,\ldots, h_1+h_2-1\}$, \ldots, $V(H_u)=\{h_1+\cdots + h_{m-2}, \ldots, m -1\}$.

For an $H$-defined digraph when $H$ is (an undirected or oriented) $K_{a,b}$ with bipartition $A \cup B$, we will use a semi-colon to separate the sets corresponding to the vertices of $A$ from those corresponding to the vertices of $B$.  If $H$ is a disjoint union of $H_1 \cup \cdots \cup H_u$, we will similarly use semicolons to separate the sets corresponding to the vertices of distinct $H_i$.

\begin{example}\label{ex:Z13}
In $G=(\mathrm{GF}(13),+)$, let $A_0=\{1,5,8,12\}, A_1=\{2,3,10,11\}$ and $A_2=\{4,6,7,9\}$.  Then $(A_0,A_1,A_2)$ is a $(13,3,4,8; C_3)$-EDF, a $(13,3,4,4; C_3^*)$-EDF, a $(13,3,4,8; K_3)$-EDF and a $(13,3,4,4; K_3^*)$-EDF.
\end{example}

\begin{remark}
The structures presented in Section \ref{sec:background} can be put into this new framework as follows.
Let $\mathcal{A}=\{A_0,\ldots, A_{m-1}\}$ be a collection of disjoint $l$-subsets of a group $G$.
\begin{itemize}\label{main:EDFs}
\item $\mathcal{A}$ is an $(n,m,l,\lambda)$-EDF precisely if $(A_0,\ldots,A_{m-1})$ is an $(n,m,l,\lambda;K_m)$-EDF.
\item $\mathcal{A}$ is an $(n,m,l,\lambda)$-SEDF precisely if, for each $i$, $(A_0,A_1,\ldots, A_{i-1},A_{i+1},\ldots A_{m-1}; A_i)$ is an $(n,m,l,\lambda; K_{m-1,1}^*)$-EDF.
\item $\mathcal{A}$ is an $(n,m,l,\lambda)$-$1$-CEDF precisely if $(A_0,\ldots,A_{m-1})$ is an $(n,m,l,\lambda; C_m^*)$-EDF.
\item If $c>1$ and $\gcd(c,m)=1$, $\mathcal{A}$ is an $(n,m,l,\lambda)$-$c$-CEDF precisely if $(A_0, A_c, A_{2c},\ldots,A_{c(m-1)})$ is an $(n,m,l,\lambda; C_m^*)$-EDF (where indices are taken modulo $m$).
\item If $\gcd(c,m)=d>1$, $\mathcal{A}$ is an $(n,m,l,\lambda)$-$c$-CEDF precisely if 
$$(A_0,A_{c}, \ldots, A_{(m/d-1)c}; A_1, A_{1+c}, \ldots, A_{1+(m/d-1)c}; \ldots; A_{(d-1)}, \ldots, A_{(d-1)+(m/d-1)c})$$ 
is an $(n,m,l,\lambda; H)$-EDF where $H$ is the disjoint union of $d$ oriented cycles $C_{m/d}^*$ if $m/d \geq 3$ and the disjoint union of $d$ undirected paths $P_1$ if $m/d=2$.
\end{itemize}
\end{remark}

\begin{example}
The $(17,4,4,4)$-$2$-CEDF in $\mathbb{Z}_{17}$ given by sets $A_0=\{1,13,16, 4\}, A_1=\{3,5,14,12\}, A_2=\{9,15,8,2\}$ and $A_3=\{10,11,7,6\}$ is a $(17,4,4,4; H)$-EDF, where $V(H)=\{0,1,2,3\}$ and $H$ is the disjoint union $H_1 \cup H_2$ where $\E(H_1)=\{(0,2), (2,0)\}$ and $\E(H_2)=\{(1,3), (3,1)\}$.
\end{example}
\end{section}

Finally we present relationships between EDFs defined by directed and undirected versions of the same underlying graph.

\begin{theorem}\label{general:dir_to_undir}
Let $G$ be a group of order $n$.  Let $H$ be a graph on $m$ vertices and let $H^*$ denote any orientation of $H$.
If $\mathcal{A}$ is an $(n,m,l,\lambda;H^*)$-EDF, then $\mathcal{A}$ is an $(n,m,l,2\lambda;H)$-EDF.
\end{theorem}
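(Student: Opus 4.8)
The plan is to use the fact that, since $H^*$ is an orientation of the graph $H$, its directed edge-set $\E(H^*)$ contains exactly one of the two arcs $(i,j)$, $(j,i)$ for each undirected edge $\{i,j\} \in E(H)$. Writing $\E(H^*)^{-1} := \{(j,i) : (i,j) \in \E(H^*)\}$ for the reversed arc-set, the definition of $\E(H)$ for an undirected graph then yields the disjoint decomposition $\E(H) = \E(H^*) \sqcup \E(H^*)^{-1}$ (the union is disjoint because the graphs are simple, so $i \neq j$ throughout, and every arc of $\E(H)$ arises exactly once). This structural observation is what everything rests on.

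First I would split the defining multiset of the $H$-defined EDF along this decomposition, obtaining
$$\bigcup_{(i,j) \in \E(H)} \Delta(A_j, A_i) = \left( \bigcup_{(i,j) \in \E(H^*)} \Delta(A_j, A_i) \right) \cup \left( \bigcup_{(i,j) \in \E(H^*)^{-1}} \Delta(A_j, A_i) \right).$$
Next I would reindex the second union via the bijection $(i,j) \mapsto (j,i)$ between $\E(H^*)^{-1}$ and $\E(H^*)$, which rewrites it as $\bigcup_{(i,j) \in \E(H^*)} \Delta(A_i, A_j)$. Combining this with the elementary multiset identity $\Delta(A_i, A_j) = -\Delta(A_j, A_i)$ (valid since $a-b$ and $b-a$ are negatives in $G$, where $-S$ denotes $\{-s : s \in S\}$), the second union becomes the negation of the first.

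To finish, I would invoke the hypothesis that $\mathcal{A}$ is an $(n,m,l,\lambda; H^*)$-EDF, so that the first union equals $\lambda(G \setminus \{0\})$. Since $x \mapsto -x$ is a bijection of $G$ fixing $0$, it permutes $G \setminus \{0\}$, and hence the negation of $\lambda(G \setminus \{0\})$ is again $\lambda(G \setminus \{0\})$ as a multiset. The total multiset is therefore $\lambda(G \setminus \{0\}) \cup \lambda(G \setminus \{0\}) = 2\lambda(G \setminus \{0\})$, which is precisely the condition for $\mathcal{A}$ to be an $(n,m,l,2\lambda; H)$-EDF.

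I expect the only point requiring care to be the bookkeeping that pairs each arc of $\E(H^*)$ with its reverse and then applies multiset negation; there is no genuine obstacle here, as the result reduces to the definitions together with the symmetry of difference multisets under negation.
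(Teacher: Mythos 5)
Your proof is correct. One thing to be aware of: the paper states Theorem \ref{general:dir_to_undir} without any proof, evidently regarding it as immediate, so there is no proof of record to compare against --- but your argument is precisely the one implied by the surrounding text. The paper's proof of the partial converse, Theorem \ref{general:ij=ji}, uses the same decomposition you do (there written $\E(H)=\overrightarrow{E}(H^*)\cup\overrightarrow{E}(\overline{H^*})$, matching your $\E(H^*)\sqcup\E(H^*)^{-1}$), but since that direction cannot force the two halves of the union to contribute equally on its own, it must \emph{assume} $\Delta(A_i,A_j)=\Delta(A_j,A_i)$ as a hypothesis. Your forward direction replaces that hypothesis with the identity $\Delta(A_i,A_j)=-\Delta(A_j,A_i)$ together with the observation that $x\mapsto -x$ is a bijection of $G$ fixing $0$ and hence fixes the multiset $\lambda(G\setminus\{0\})$; this negation symmetry is exactly what makes Theorem \ref{general:dir_to_undir} unconditional while its converse requires an extra condition (and forces $\lambda$ even). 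The only point you gloss over is that the paper allows non-abelian $G$ written additively: there $-(a-b)=-(a+(-b))=b+(-a)=b-a$ still holds, since inversion reverses products, and inversion is still a bijection fixing the identity, so both ingredients of your argument survive and the proof is valid for arbitrary finite groups.
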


We also have the following partial converse.

\begin{theorem}\label{general:ij=ji}
Let $G$ be a group of order $n$.  Let $H$ be a graph on $m$ vertices and let $H^*$ denote any orientation of $H$.
If $\mathcal{A}=(A_0,\ldots, A_{m-1})$ is an $(n,m,l,\lambda;H)$-EDF and $\Delta(A_i,A_j)=\Delta(A_j,A_i)$ for all edges $\{i,j\}$ of $H$, then $\lambda$ is even and $\mathcal{A}$ is an $(n,m,l,\lambda/2;H^*)$-EDF.
\end{theorem}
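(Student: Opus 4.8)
The plan is to compare the defining multiset for the undirected graph $H$ with that for its orientation $H^*$, edge by edge, and to use the symmetry hypothesis to relate the two by an exact factor of two. First I would recall the convention for viewing an undirected graph as a digraph: each undirected edge $\{i,j\}$ of $H$ contributes \emph{both} ordered pairs $(i,j)$ and $(j,i)$ to $\E(H)$, whereas in $H^*$ that same edge contributes exactly one of them. Hence the contribution of the edge $\{i,j\}$ to the undirected EDF multiset is $\Delta(A_j,A_i) \cup \Delta(A_i,A_j)$, while its contribution to the oriented EDF multiset is the single term $\Delta(A_j,A_i)$ or $\Delta(A_i,A_j)$, according to the orientation chosen.

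Next I would apply the hypothesis $\Delta(A_i,A_j)=\Delta(A_j,A_i)$ for every edge $\{i,j\}$ of $H$. On the undirected side this turns the contribution of each edge into the doubled multiset $2\,\Delta(A_i,A_j)$; on the oriented side it makes the single contribution of each edge equal to $\Delta(A_i,A_j)$ irrespective of the direction $H^*$ assigns, so the oriented multiset does not depend on the choice of orientation. Writing $M = \bigcup_{\{i,j\}\in E(H)} \Delta(A_i,A_j)$ for this common oriented multiset, the defining equation of the $(n,m,l,\lambda;H)$-EDF becomes $2M = \lambda(G\setminus\{0\})$.

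It remains only to read off the conclusion. Every multiplicity in $2M$ is even by construction, and each non-identity element has multiplicity $\lambda$ there; therefore $\lambda$ is even and each non-identity element has multiplicity $\lambda/2$ in $M$, giving $M = (\lambda/2)(G\setminus\{0\})$. This is precisely the statement that $\mathcal{A}$ is an $(n,m,l,\lambda/2;H^*)$-EDF, and since the argument is valid for every orientation the result holds for any $H^*$.

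The computation itself is routine; the one point requiring care is the distinction between the unordered edge-set $E(H)$ and the ordered directed edge-set $\E(H)$, together with the observation that the symmetry hypothesis is exactly what collapses the two directed contributions of an undirected edge to a common value, so that the relationship with $H^*$ is an exact doubling rather than merely a containment. I would also remark that this is the natural partial converse of Theorem \ref{general:dir_to_undir}: the same factor of two between $\lambda$ and $2\lambda$ reappears, now recovered in the reverse direction under the additional symmetry assumption.
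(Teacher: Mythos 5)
Your proposal is correct and follows essentially the same route as the paper's proof: decompose $\E(H)$ into the two opposite orientations of each edge, use the symmetry hypothesis $\Delta(A_i,A_j)=\Delta(A_j,A_i)$ to show the undirected difference multiset is exactly twice the oriented one, and read off $\lambda/2$. The only difference is cosmetic --- the paper packages the edge-by-edge pairing via the reversed digraph $\overline{H^*}$ with $\E(H)=\E(H^*)\cup\E(\overline{H^*})$, while you argue per edge and spell out the evenness of $\lambda$ (which the paper leaves implicit in ``the result follows'').
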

\begin{proof} 
Denote by $\overline{H^*}$ a copy of $H$ with the reverse orientation to that of $H^*$. So $(i,j) \in \E(\overline{H^*})$ precisely if $(j,i) \in \E(H^*)$ and we have ${\E}(H)=\overrightarrow{E}(H^*)\cup\overrightarrow{E}(\overline{H^*})$.  Each edge $\{i,j\} \in E(H)$ is the union of $(i,j) \in \overrightarrow{E}(H^*)$ and $(j,i) \in \overrightarrow{E}(\overline{H^*})$. For each $\{i,j\} \in E(H)$, since $\Delta(A_i,A_j)=\Delta(A_j,A_i)$, there is a contribution of $\Delta(A_i,A_j)$ to the difference multisets of the $H^*$-defined EDF and of the $\overline{H^*}$-defined EDF, corresponding to a contribution of $2 \Delta(A_i,A_j)$ to the difference multiset of the EDF, which equals $\lambda(G \setminus \{0\})$. The result follows.
\end{proof}

While these relationships are helpful in understanding the links between directed and undirected versions, they are far from describing the whole picture.  There are many examples of both type of EDF which do not result from applying Theorems \ref{general:dir_to_undir} or \ref{general:ij=ji} to an EDF of the other type.

\begin{section}{$H$-defined EDFs when $H$ is complete}

As we have seen, the $H$-defined EDFs when $H=K_m$ precisely correspond to the standard EDFs.  These structures have been much-studied and so we will mention them only briefly here.  In contrast, those with $H=K_m^*$ do not - to our knowledge - correspond to previously-investigated types of external difference families.

\begin{subsection}{External difference families}
The standard EDFs have received considerable attention and many results about them are known \cite{HucPat, PatSti1}. However, it is perhaps worth noting that, due to their relative lack of structure compared to other variations of the definition, there are fewer known general constructions for EDFs than there are for EDFs with extra conditions such as SEDFs.

In order to present a useful general construction, we require a few facts about cyclotomy.  For more information on cyclotomy, see \cite{Sto}.  Let $q$ be a power of a prime $p$ and let $ {\rm GF}(q)$ denote the finite field of order $q$.  Let $\alpha$ be a primitive element of $ {\rm GF}(q)$.  

\begin{definition}\label{def:cclass}
Let $q=ef+1$ where $e,f \in \mathbb{N}$.  The cyclotomic classes $C_i^e$ in $ {\rm GF}(q)$ of order $e$ ($0 \leq i \leq e-1$) are defined as:
$$ C_i^e=\{\alpha^{es+i}: 0 \leq s \leq f-1\}.$$
Here, $C_0^e$ is the multiplicative subgroup of $ {\rm GF}(q)$ of cardinality $f$.
\end{definition}

We now present the following well-known EDF construction \cite{ChaDin, DavHucMul}:

\begin{theorem}\label{thm:cycEDF}
Let $q=ef+1$ be a prime power.  Then the set $\{C_0^e, \ldots, C_{e-1}^e\}$ of all cyclotomic classes of order $e$ in $GF(q)$ forms a $(q,e,f,(e-1)f;K_e)$-EDF. 
\end{theorem}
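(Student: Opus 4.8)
The plan is to exploit the multiplicative symmetry of the cyclotomic classes to show directly that the external difference multiset is a constant multiple of ${\rm GF}(q) \setminus \{0\}$, and then to pin down that constant by a simple counting argument. By Definition~\ref{def:H-defined EDF} applied with $H = K_e$ and $\E(K_e) = \{(i,j) : i \neq j\}$, it suffices to verify the single multiset equation $M := \bigcup_{i \neq j} \Delta(C_i^e, C_j^e) = (e-1)f\,({\rm GF}(q) \setminus \{0\})$, where the union runs over all ordered pairs of distinct indices (the choice $\Delta(C_i^e,C_j^e)$ versus $\Delta(C_j^e,C_i^e)$ is immaterial since both orderings are summed).

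First I would record the basic structural facts following from Definition~\ref{def:cclass}: the classes $C_0^e, \ldots, C_{e-1}^e$ are precisely the cosets of the index-$e$ multiplicative subgroup $C_0^e$, they partition ${\rm GF}(q)^* = {\rm GF}(q) \setminus \{0\}$, and each has size $f$. Because distinct classes are disjoint, any difference $a - b$ with $a \in C_i^e$, $b \in C_j^e$ and $i \neq j$ is nonzero; hence $0 \notin M$, and it remains only to show that each nonzero element occurs in $M$ with the same multiplicity.

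The key step is to observe that $M$ is invariant under multiplication by the primitive element $\alpha$. Multiplication by $\alpha$ sends $C_i^e$ to $C_{i+1 \bmod e}^e$ (using $\alpha^{ef} = \alpha^{q-1} = 1$), so it induces the cyclic permutation $i \mapsto i+1 \bmod e$ of the index set; since this permutation preserves the relation $i \neq j$, and since $\alpha \cdot \Delta(C_i^e, C_j^e) = \Delta(\alpha C_i^e, \alpha C_j^e)$, we obtain $\alpha M = \bigcup_{i \neq j} \Delta(C_{i+1}^e, C_{j+1}^e) = M$ as multisets. As $\alpha$ generates ${\rm GF}(q)^*$, the multiset $M$ is invariant under the whole multiplicative group, which acts transitively on ${\rm GF}(q) \setminus \{0\}$; therefore every nonzero element appears in $M$ the same number of times, say $\lambda$.

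Finally I would determine $\lambda$ by counting. The number of ordered pairs $(a,b) \in {\rm GF}(q)^* \times {\rm GF}(q)^*$ lying in distinct classes is $(ef)^2 - e f^2 = e(e-1)f^2$, and this equals $|M| = \lambda(q-1) = \lambda e f$; solving gives $\lambda = (e-1)f$, as required. The only real subtlety is the invariance step: one must check carefully that multiplication by $\alpha$ genuinely permutes the classes cyclically and that this forces uniform multiplicity across all of ${\rm GF}(q)\setminus\{0\}$. Once this is in hand, the rest is bookkeeping. A more computational alternative would express the multiplicities of individual elements in terms of the classical cyclotomic numbers and invoke their summation identities, but the symmetry argument avoids that machinery entirely.
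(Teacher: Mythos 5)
Your proof is correct and complete. Note that the paper itself offers no proof of this theorem: it is stated as a known result with citations to Chang--Ding and Davis--Huczynska--Mullen, so there is nothing internal to compare against line by line. Your route is the clean multiplicative-symmetry argument: since $\alpha C_i^e = C_{i+1 \bmod e}^e$ and multiplication distributes over differences, the shift $i \mapsto i+1$ permutes the ordered pairs $(i,j)$ with $i \neq j$, so $\alpha M = M$; invariance under the cyclic group $\langle \alpha \rangle = \mathrm{GF}(q)^*$, which acts transitively on nonzero elements, forces uniform multiplicity, and the count $e(e-1)f^2 = \lambda \cdot ef$ pins down $\lambda = (e-1)f$. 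All the supporting steps check out: the classes are the cosets of the index-$e$ subgroup $C_0^e$, so they partition $\mathrm{GF}(q)^*$ and disjointness gives $0 \notin M$; the wraparound case $\alpha C_{e-1}^e = C_0^e$ works because $\alpha^{q-1}=1$. For comparison, the usual argument in the cited literature runs through the complement: the multiset $\Delta(\mathrm{GF}(q)^*, \mathrm{GF}(q)^*)$ contains each nonzero element $q-2$ times, while each internal multiset $\Delta(C_i^e)$ equals $\alpha^i \Delta(C_0^e)$ and $\Delta(C_0^e)$ is invariant under multiplication by the subgroup $C_0^e$, so $\bigcup_i \Delta(C_i^e) = (f-1)(\mathrm{GF}(q)\setminus\{0\})$, giving external multiplicity $(q-2)-(f-1)=(e-1)f$. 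Both arguments avoid cyclotomic numbers; yours establishes uniformity directly on the external multiset, theirs extracts it by subtracting internal differences. One trivial remark: Definition~\ref{def:EDF} requires $m>1$, so implicitly $e \geq 2$ (for $e=1$ there is a single class and no external differences); this edge case is ignored in the paper's statement as well and does not affect your argument.
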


\begin{example}\label{ex:K_3EDF}
In $GF(13)$, taking $e=3$ and $\alpha=2$, we have that $(C_0^3,C_1^3,C_2^3)$  forms a $(13,3,4,8;K_3)$-EDF, where $C_0^3=\{1,5,12,8\}, C_1^3=\{2,10,11,3\}$ and $C_2^3=\{4,7,9,6\}$.
\end{example}

\end{subsection}

\begin{subsection}{Tournament-defined EDFs}

We establish the following results for EDFs defined by tournaments. By Theorem \ref{general:dir_to_undir}, any EDF defined by a tournament yields an EDF in the standard sense, while by Theorem \ref{general:ij=ji}, a standard EDF which satisfies an extra condition may be used to obtain a tournament-defined EDF.

We present a direct cyclotomic construction for tournament-defined EDFs.  We first need a technical lemma (Lemma 3.13 from \cite{HucJoh}):

\begin{lemma}\label{lem:SophieLaura}
Let $q=ef+1$ be a prime power.  If either $e$ is odd, or $e$ is even and $q \equiv 1 \mod 2e$, then $-1 \in C_0^e$.
\end{lemma}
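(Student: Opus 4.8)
The plan is to work inside the cyclic group $({\rm GF}(q)^*, \cdot)$, which is generated by the primitive element $\alpha$ and has order $q-1 = ef$, and to pin down exactly where $-1$ sits among the powers of $\alpha$. First I would dispose of the characteristic-$2$ case: if $q$ is even then $-1 = 1 = \alpha^0 \in C_0^e$ trivially, and this is consistent with the hypothesis since $e \mid q-1$ forces $e$ to be odd whenever $q-1$ is odd. So the substantive case is $q$ odd, in which $-1$ is the unique element of order $2$ in the cyclic group ${\rm GF}(q)^*$.

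The main observation is then that, since $\alpha$ has order $ef = q-1$ (an even number when $q$ is odd), the unique element of order $2$ is $\alpha^{ef/2}$, so $-1 = \alpha^{ef/2}$. Recalling from Definition \ref{def:cclass} that $C_0^e$ is the multiplicative subgroup $\langle \alpha^e \rangle = \{\alpha^k : e \mid k\}$, the membership $-1 \in C_0^e$ reduces to the purely arithmetic condition $e \mid ef/2$. A short parity check shows that this divisibility holds if and only if $f$ is even.

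Finally I would verify that each of the two hypotheses forces $f$ to be even, thereby guaranteeing the membership. If $e$ is odd, then since $ef = q-1$ is even, $f$ must be even. If instead $e$ is even and $q \equiv 1 \pmod{2e}$, then $2e \mid q-1 = ef$, which again yields $2 \mid f$. In either case $f$ is even and the condition $e \mid ef/2$ of the previous paragraph is met.

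Since each step is an elementary computation in the cyclic group ${\rm GF}(q)^*$, there is no serious obstacle here; the only point requiring a little care is the parity bookkeeping behind the equivalence ``$e \mid ef/2 \iff f$ even'', together with the realization that both stated hypotheses are really just two ways of encoding the single underlying requirement that $f$ be even.
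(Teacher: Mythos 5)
Your proof is correct and complete. Note that the paper itself offers no proof to compare against: it imports this statement verbatim as Lemma 3.13 of \cite{HucJoh}, so your argument is a self-contained replacement for the citation. What you give is the standard cyclotomic argument, and every step checks out: for odd $q$ we have $-1=\alpha^{ef/2}$ as the unique involution of the cyclic group $\mathrm{GF}(q)^*$, membership in $C_0^e=\langle\alpha^e\rangle$ is exactly the condition $e \mid ef/2$, and the parity equivalence is right --- if $f$ is even then $ef/2=e(f/2)$, while if $f$ is odd (forcing $e$ even) then $(e/2)f\equiv e/2 \not\equiv 0 \pmod{e}$, so in that case $-1$ lands in $C_{e/2}^e$ instead. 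Both hypotheses do indeed reduce to ``$f$ even'': $e$ odd together with $2\mid ef=q-1$ gives $2\mid f$, and $q\equiv 1 \pmod{2e}$ gives $2e\mid ef$, i.e.\ $2\mid f$. Your observation that the two hypotheses are just two encodings of the single condition ``$f$ even'' (for odd $q$) is a genuine clarification that the paper's bare citation does not provide, and your handling of the characteristic-$2$ case, where $-1=1\in C_0^e$ trivially and only the ``$e$ odd'' hypothesis can occur, is a necessary check that is easy to overlook.
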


\begin{theorem}\label{thm:cyc_tournament}
Let $q=ef+1$ be a prime power such that either $e$ is odd, or $e$ is even and $q \equiv 1 \mod 2e$.
Let $K_e^*$ be a tournament with an arbitrary orientation.  Let $\mathcal{A}=(C_0^e, \ldots, C_{e-1}^e)$.
Then $\mathcal{A}$ forms a $(q,e,f,(e-1)f/2; K_e^*)$-EDF in $GF(q)$.
\end{theorem}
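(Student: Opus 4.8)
The plan is to combine the known undirected cyclotomic EDF of Theorem~\ref{thm:cycEDF} with the partial converse Theorem~\ref{general:ij=ji}. By Theorem~\ref{thm:cycEDF}, $\mathcal{A}=(C_0^e,\ldots,C_{e-1}^e)$ is already a $(q,e,f,(e-1)f;K_e)$-EDF, and $K_e^*$ is by definition an orientation of $K_e$. It therefore suffices to verify the single extra hypothesis of Theorem~\ref{general:ij=ji}, namely that $\Delta(C_i^e,C_j^e)=\Delta(C_j^e,C_i^e)$ for every edge $\{i,j\}$ of $K_e$ (that is, for all $i\neq j$). The theorem will then automatically force $\lambda=(e-1)f$ to be even and deliver a $(q,e,f,(e-1)f/2;K_e^*)$-EDF for the given, arbitrary orientation, which is exactly the claimed conclusion.

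The crucial step is to establish this symmetry, and this is the only place the arithmetic hypotheses on $q$ and $e$ are used: they are precisely the conditions of Lemma~\ref{lem:SophieLaura}, which guarantees $-1\in C_0^e$. Since $C_0^e$ is the multiplicative subgroup of order $f$ and each class is a coset $C_k^e=\alpha^k C_0^e$, multiplication by $-1\in C_0^e$ fixes each class setwise, so $-C_k^e=C_k^e$; every cyclotomic class is closed under negation. I would then argue at the level of multisets: trivially $\Delta(C_j^e,C_i^e)=-\Delta(C_i^e,C_j^e)$ (replacing each $a-b$ by $b-a=-(a-b)$), while applying $x\mapsto -x$ to $\Delta(C_i^e,C_j^e)$ sends $a-b$ to $(-a)-(-b)$, where $a\mapsto -a$ is a bijection of $C_i^e$ onto itself and $b\mapsto -b$ a bijection of $C_j^e$ onto itself. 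Hence $-\Delta(C_i^e,C_j^e)=\Delta(C_i^e,C_j^e)$, and combining the two gives $\Delta(C_i^e,C_j^e)=\Delta(C_j^e,C_i^e)$, as required.

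With the symmetry in hand, Theorem~\ref{general:ij=ji} applies directly and completes the proof. I expect the main (and essentially only) obstacle to be the bookkeeping in this symmetry step: one must ensure that the closure relation $-C_k^e=C_k^e$ yields equality of the \emph{multisets} of differences, not merely of their underlying sets, which is why I would phrase the argument through the measure-preserving bijection $a\mapsto -a$ on each class rather than via a set-theoretic containment. As a consistency check that $(e-1)f/2$ is genuinely an integer, note the hypotheses already force $(e-1)f$ even: this is immediate when $e$ is odd, and when $e$ is even with $q\equiv 1 \pmod{2e}$ we have $2e\mid q-1=ef$, whence $f$ is even.
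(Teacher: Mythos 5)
Your proposal is correct and follows essentially the same route as the paper's own proof: start from the $(q,e,f,(e-1)f;K_e)$-EDF of Theorem \ref{thm:cycEDF}, invoke Lemma \ref{lem:SophieLaura} to get $-1\in C_0^e$ and hence $-C_i^e=C_i^e$ for every class, deduce $\Delta(C_i^e,C_j^e)=\Delta(C_j^e,C_i^e)$, and conclude via Theorem \ref{general:ij=ji}. Your explicit multiset-level bijection argument for the symmetry step and your parity check that $(e-1)f$ is even merely spell out details the paper asserts without elaboration, and both are accurate.
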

\begin{proof}
By Theorem \ref{thm:cycEDF}, $\mathcal{A}$ is a $(q,e,f,(e-1)f); K_e)$-EDF.  In both cases (i) and (ii) of Lemma \ref{lem:SophieLaura},  $-1 \in C_0^e$, hence $C_i^e=-C_i^e$ for all $0 \leq i \leq e-1$ and so $\Delta(C_i^e,C_j^e)=\Delta(C_j^e,C_i^e)$ for all $0 \leq i,j \leq m-1$. In both cases, $(e-1)f$ is even and the result follows by  Theorem \ref{general:ij=ji}.
\end{proof}

\begin{example}
In $\mathrm{GF}(13)$, consider the cyclotomic classes $C_0^3,C_1^3,C_2^3$ of order $3$. By Theorem \ref{thm:cyc_tournament}, since $e=3$ is odd, $(C_0^3,C_1^3,C_2^3)$ form a $(13,3,4,4; K_3^*)$-EDF, where $K_3^*$ is a tournament on $3$ vertices with arbitrary orientation.  Note that these same sets were discussed in Example \ref{ex:Z13} and Example \ref{ex:K_3EDF}.   
\end{example}

\end{subsection}

\end{section}

\begin{section}{$H$-defined EDFs when $H$ is a cycle or union of cycles}

We have seen that CEDFs correspond to oriented cycles, or disjoint unions of same-size oriented cycles, with the standard orientation (where we include a single undirected edge as a directed cycle of length $2$).  

\begin{subsection}{Circular external difference families}
CEDFs were introduced in \cite{VeiSti} and various results have been established about them.  In \cite{VeiSti}, sufficient conditions for the existence of certain cyclotomic CEDFs in the additive group of a finite field were given in terms of primitive elements of the field.  The cyclotomic approach was further extended in \cite{WuYanFen}; this paper also contains structural characterizations and non-existence results for strong CEDFs. In \cite{PatSti2}, existence of infinite families of CEDFs with $\lambda=1$ in cyclic groups was established using graceful labellings of graphs. (We note that the use of graceful labellings in the strong difference family context appeared in \cite{BurGio}).  In \cite{BurMerTra}, constructions are given in cyclic groups for CEDFs with $\lambda=1$, for cases not previously covered.  In this section, we present various new results for CEDFs.

In \cite{PatSti2}, Theorem 1.8 states three main results on $(ml^2+1,m,l,1)$-$1$-CEDFs in abelian groups.  The second part asserts that if $l$ and $m$ are odd then there is no $(ml^2 + 1, m, l, 1)$-$1$-CEDF.  It seems that this result holds only in the context of cyclic groups, since we show that an infinite family of $1$-CEDFs with such parameters exist in non-cyclic abelian groups.

\begin{subsection}{CEDFs in abelian non-cyclic groups and non-abelian groups}

We present a construction for an infinite family of CEDFs in abelian groups which are not cyclic nor elementary abelian.

\begin{theorem}\label{thm:noncyclic}
Let $l \equiv 3 \mod 4$ ($l \in \mathbb{N}$). Denote $z=\frac{3}{4}(l-1)^2 \in \mathbb{N}$. Define the following subsets of $\mathbb{Z}_{\frac{3l^2+1}{2}}\times\mathbb{Z}_2$:
\begin{itemize}
\item $A_0=\bigcup_{i=0}^{l-1}\{(i,0)\}$
\item $A_1=\bigcup_{i=0}^{l-1}\{(z(i-1)-l-i,i+1)\}$
\item $A_2=\bigcup_{i=0}^{l-1}\{(zi-l,i) \}$
\end{itemize}
where the first component is taken modulo $\frac{3l^2+1}{2}$ and the second component is taken modulo $2$.\\
Then $(A_0,A_1,A_2)$ form a $(3l^2+1,3,l,1)$-CEDF in the non-cyclic abelian group $\mathbb{Z}_{\frac{3l^2+1}{2}}\times\mathbb{Z}_2$.
\end{theorem}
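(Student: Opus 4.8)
The plan is to verify the defining multiset equation for a $1$-CEDF directly, namely that
$$\Delta(A_1,A_0)\cup\Delta(A_2,A_1)\cup\Delta(A_0,A_2)=G\setminus\{(0,0)\},$$
where $G=\mathbb{Z}_N\times\mathbb{Z}_2$ and $N=\tfrac{3l^2+1}{2}$. Since $\lvert G\rvert-1=3l^2$, and since the three external difference families, counted as ordered index-pairs, contribute exactly $3l^2$ entries in total, the entire statement (including that the $A_t$ are disjoint $l$-sets) will follow from a single claim: the $3l^2$ group elements obtained by subtracting, index by index, the entries of consecutive sets are pairwise distinct and none is the identity. Indeed, if two indices of a single set $A_t$ produced the same point, or if two of the sets shared an element (for $m=3$ every pair of sets is consecutive in the cycle $A_0\to A_1\to A_2\to A_0$), then two of these index-parametrised differences would coincide, or one of them would equal $(0,0)$.

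First I would record the arithmetic facts that drive the construction. Writing $l-1=2u$ with $u$ odd (valid since $l\equiv 3\bmod 4$), one checks that $N$ and $z=3u^2$ are positive integers, that $G$ has order $2N=3l^2+1$ as required, and, crucially, that
$$2z+3l=N+1,\qquad\text{so that}\qquad 2z\equiv 1-3l \pmod N.$$
I would also establish $\gcd(z,N)=1$: since $z=3u^2$ is odd, any common prime divisor $p$ is odd, and the displayed congruence together with $2N=3l^2+1$ forces $3l\equiv 1$ and $l\equiv-1\pmod p$, whence $p\mid 4$, a contradiction. This coprimality guarantees that multiplication by $z$ permutes $\mathbb{Z}_N$, which is what spreads the index sets across all residues.

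The core of the argument is a parity split. Computing the three families shows that the second coordinate of each difference is determined by the index parities: it equals $(i+1)\bmod 2$ for $\Delta(A_1,A_0)$, $(i+j+1)\bmod 2$ for $\Delta(A_2,A_1)$, and $j\bmod 2$ for $\Delta(A_0,A_2)$. A short count then shows that exactly $N$ of the $3l^2$ differences have second coordinate $1$ and exactly $N-1$ have second coordinate $0$, matching the $N$ elements $(a,1)$ and the $N-1$ nonzero elements $(a,0)$ of $G$. It therefore suffices to prove two residue statements modulo $N$: the first coordinates of the second-coordinate-$1$ differences form a complete residue system, while those of the second-coordinate-$0$ differences form the punctured system $\{1,\ldots,N-1\}$ (in particular omitting $0$, which is what rules out the identity).

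I expect the last step to be the main obstacle. Each parity class receives contributions from all three families, and within each family fixing one index while varying the other produces a short arithmetic progression whose common difference is $1$, $z$, or $z-1$; the relation $2z\equiv1-3l\pmod N$ is exactly what makes the three collections of progressions interlock to tile $\mathbb{Z}_N$ without overlap. I would organise this verification as a case analysis on the parities and ranges of the indices, using $\gcd(z,N)=1$ to rule out coincidences, and confirm the tiling either by exhibiting an explicit bijection between the index triples and the residues or by showing the relevant $N$ (resp.\ $N-1$) first coordinates are pairwise incongruent modulo $N$. The small case $l=3$ (where $N=14$ and $z=3$) serves as a useful guide, since there the two parity classes visibly realise $\{0,1,\ldots,13\}$ and $\{1,\ldots,13\}$ respectively.
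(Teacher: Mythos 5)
Your framework coincides with the paper's: you reduce, via the count $3l^2=\lvert G\rvert-1$, to a single statement about the $3l^2$ index-parametrised differences (the paper phrases it as ``each non-identity element occurs at least once'', you as ``pairwise distinct and never the identity''; these are equivalent by pigeonhole, and both correctly absorb disjointness and the claim that each $A_t$ consists of $l$ distinct elements). Your computation of the second coordinates of the three difference families is right, your count that exactly $N$ differences land in $\mathbb{Z}_N\times\{1\}$ and $N-1$ in $\mathbb{Z}_N\times\{0\}$ is correct (a tidy refinement that the paper leaves implicit), and the congruence $2z\equiv 1-3l \pmod{N}$ is indeed the paper's key simplification. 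On the other hand, the observation $\gcd(z,N)=1$, while true, plays no role in the paper's argument and cannot by itself do the work you want: knowing that multiplication by $z$ permutes $\mathbb{Z}_N$ says nothing about whether the relevant arithmetic progressions concatenate without gaps or overlaps.

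The genuine gap is that the step you yourself flag as ``the main obstacle'' is the entire content of the theorem, and you leave it as a declaration of intent rather than a proof. The paper spends the whole second half of its argument on exactly this verification. It first rewrites the three families as difference-$1$ runs: $l$ horizontal runs of length $l$ in $\Delta(A_1,A_0)$; diagonal runs in $\Delta(A_2,A_1)$ indexed by $k=i-j+1\in[-l+2,l]$ with varying lengths $l-1+k$ (for $k\le 0$) and $l+1-k$ (for $k\ge 1$), which is where the wraparound you would need to control lives; and $l$ vertical runs of length $l$ in $\Delta(A_0,A_2)$. It then exhibits an explicit partition: for the $\{0\}$-component, a quadruple of runs for each $s$ with $0\le s\le\frac{l-3}{2}$ whose union is the interval $[(3l+1)s+1,(3l+1)(s+1)]\times\{0\}$, plus one leftover run; for the $\{1\}$-component, two quadruple families over $0\le s\le\frac{l-7}{4}$ plus six boundary runs, with the degenerate case $l=3$ (where those families are empty) checked separately. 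The endpoint arithmetic also uses $z(l-1)\equiv 2l\pmod{N}$, obtained by multiplying your congruence by the integer $\frac{l-1}{2}$. Until you produce such an explicit assignment of index values to residues and verify the endpoint matching, you have established only parity bookkeeping: a ``case analysis on parities and ranges'' of the kind you sketch could equally well be attempted for a wrong variant of the construction, so the proposal as written does not yet distinguish the true statement from a false one.
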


To aid the reader's intuition, we first present some specific examples of Theorem \ref{thm:noncyclic}. While the statement of the construction may look uninuitive, the occurrence of the differences follows a very natural and regular pattern, as can be seen from the associated subtraction tables.

\begin{example}
We consider the following special cases of Theorem \ref{thm:noncyclic}.
\begin{itemize}
\item[(i)] Take $l=3$ in Theorem \ref{thm:noncyclic}.  We obtain a $(28,3,3,1)$-CEDF in $\mathbb{Z}_{14} \times \mathbb{Z}_2$ with sets $A_0=\{(0,0), (1,0), (2,0)\}$, $A_1=\{(8,1), (10,0), (12,1)\}$ and $A_2=\{(11,0), (0,1), (3,0)\}$.  The subtraction table is shown in Table \ref{table_noncyc_28}.
\item[(ii)] Take $l=7$ in Theorem \ref{thm:noncyclic}.  We obtain a $(148,3,7,1)$-CEDF in $\mathbb{Z}_{74} \times \mathbb{Z}_2$ with sets 
\begin{itemize}
\item $A_0=\{(0,0), (1,0), (2,0), (3,0),  (4,0), (5,0), (6,0)\}$
\item $A_1=\{(40,1),(66,0),	(18,1), (44,0), (70,1), (22,0), (48,1)\}$ 
\item $A_2=\{(67,0), (20,1), (47,0), (0,1), (27,0), (54,1),	(7,0) \}$.
\end{itemize}
The subtraction tables for $\Delta(A_1,A_0), \Delta(A_2,A_1)$ and $\Delta(A_0,A_2)$ are given in Table \ref{table:A_1A_0}, Table \ref{table:A_2A_1} and Table \ref{table:A_0A_2} respectively. 
\end{itemize}

\begin{table}[h]
\renewcommand{\arraystretch}{1.2}
\centering
\setlength\tabcolsep{3pt}
\begin{tabular}{|c|ccc|ccc|ccc|} 
\hline
$-$ & $(0,0)$ & $(1,0)$ & $(2,0)$ & $(8,1)$ & $(10,0)$ & $(12,1)$ & $(11,0)$ & $(0,1)$ & $(3,0)$\\ \hline
$(0,0)$ &  &  &  &  & &  & $(3,0)$ & $\textbf{(0,1)}$ & $(11,0)$	\\
$(1,0)$  &  &  &  &  & &  & $(4,0)$ & $\textbf{(1,1)}$ & $(12,0)$	\\
$(2,0)$  &  &  &  &  & &  & $(5,0)$ & $\textbf{(2,1)}$ & $(13,0)$	\\
\hline
$(8,1)$ & $\textbf{(8,1)}$  & $\textbf{(7,1)}$ & $\textbf{(6,1)}$ &  & &  &  & & 	\\
$(10,0)$ & $(10,0)$  & $(9,0)$ & $(8,0)$ &  & &  &  & & 	\\
$(12,1)$ & $\textbf{(12,1)}$  & $\textbf{(11,1)}$ & $\textbf{(10,1)}$ &  & &  &  & & 	\\
\hline
$(11,0)$ &  &  &  & $\textbf{(3,1)}$ & $(1,0)$ & $\textbf{(13,1)}$	& & & \\
$(0,1)$  &  &  &  & $(6,0)$ & $\textbf{(4,1)}$ & $(2,0)$	& & & \\
$(3,0)$  &  &  &  & $\textbf{(9,1)}$ & $(7,0)$ & $\textbf{(5,1)}$	& & & \\
\hline
\end{tabular}
	
\caption{Subtraction table for the construction in $\mathbb{Z}_{14} \times \mathbb{Z}_2$ from Theorem \ref{thm:noncyclic}}
\label{table_noncyc_28}
\end{table}

\begin{table}[h]
\renewcommand{\arraystretch}{1.2}
\centering
\setlength\tabcolsep{3pt}
\begin{tabular}{|c|ccccccc|} 
\hline
$-$ & $(0,0)$ & $(1,0)$ & $(2,0)$ & $(3,0)$ & $(4,0)$ & $(5,0)$ & $(6,0)$ \\ \hline
$(40,1)$ & $\textbf{(40,1)}$ & $\textbf{(39,1)}$ &	$\textbf{(38,1)}$ & $\textbf{(37,1)}$ &	$\textbf{(36,1)}$	& $\textbf{(35,1)}$ & $\textbf{(34,1)}$ \\
$(66,0)$ & $ (66,0)$	& $(65,0)$ & $(64,0)$ & $(63,0)$	& $(62,0)$	& $(61,0)$	& $(60,0)$ \\
$(18,1)$ & $ \textbf{(18,1)}$ & $	\textbf{(17,1)}$ & $	\textbf{(16,1)}	$ & $\textbf{(15,1)}	$ & $\textbf{(14,1)}$ & $	\textbf{(13,1)}$ &$	\textbf{(12,1)}$ \\
$(44,0)$ & $ (44,0)$ & $(43,0)$ & $(42,0)$ & $(41,0)	$ & $(40,0)	$ & $(39,0)	$ & $(38,0)$ \\
$(70,1)$ & $ \textbf{(70,1)}$ & $	\textbf{(69,1)}	$ & $\textbf{(68,1)}$ & $	\textbf{(67,1)}	$ & $\textbf{(66,1)}	$ & $\textbf{(65,1)}$ & $	\textbf{(64,1)}$ \\
$(22,0)$ & $ (22,0)$ & $	(21,0)$ & $(20,0)$  & $	(19,0)$ & $(18,0)$ & $(17,0)$ & $(16,0)$ \\
$(48,1)$ & $ \textbf{(48,1)}$ & $	\textbf{(47,1)}$ & $	\textbf{(46,1)}	$ & $\textbf{(45,1)}	$ & $\textbf{(44,1)}$ & $	\textbf{(43,1)}	$ & $\textbf{(42,1)}$ \\
\hline
\end{tabular}
	
\caption{$\Delta(A_1,A_0)$ for the construction in $\mathbb{Z}_{74} \times \mathbb{Z}_2$ from Theorem \ref{thm:noncyclic}}
\label{table:A_1A_0}
\end{table}

\begin{table}[h]
\renewcommand{\arraystretch}{1.2}
\centering
\setlength\tabcolsep{3pt}
\begin{tabular}{|c|ccccccc|} 
\hline
$-$ & $(40,1)$ & $(66,0)$ &$(18,1)$&$ (44,0)$&$ (70,1) $&$(22,0) $&$(48,1)$ \\ \hline
$(67,0)$ & $ \textbf{(27,1)}	$&$(1,0)	$&$\textbf{(49,1)}	$&$(23,0)	$&$\textbf{(71,1)}	$&$(45,0)	$&$\textbf{(19,1)}$ \\
$(20,1)$ & $ (54,0)	$&$\textbf{(28,1)}	$&$(2,0)	$&$\textbf{(50,1)}	$&$(24,0)	$&$\textbf{(72,1)}	$&$(46,0)$ \\
$(47,0)$ & $ \textbf{(7,1)}	$&$(55,0)	$&$\textbf{(29,1)}	$&$(3,0)	$&$\textbf{(51,1)}	$&$(25,0)	$&$\textbf{(73,1)}$ \\
$(0,1)$ & $ (34,0)	$&$\textbf{(8,1)}	$&$(56,0)	$&$\textbf{(30,1)}	$&$(4,0)	$&$\textbf{(52,1)}$&$	(26,0)$ \\
$(27,0)$ & $ \textbf{(61,1)}	$&$(35,0)	$&$\textbf{(9,1)}	$&$(57,0)	$&$\textbf{(31,1)}	$&$(5,0)	$&$\textbf{(53,1)}$ \\
$(54,1)$ & $ (14,0) $&$	\textbf{(62,1)}	$&$(36,0)	$&$\textbf{(10,1)}	$&$(58,0)	$&$\textbf{(32,1)}	$&$(6,0)$ \\
$(7,0)$ & $ \textbf{(41,1)}	$&$(15,0)	$&$\textbf{(63,1)}	$&$(37,0)	$&$\textbf{(11,1)}	$&$(59,0)	$&$\textbf{(33,1)}$ \\
\hline
\end{tabular}
	
\caption{$\Delta(A_2,A_1)$ for the construction in $\mathbb{Z}_{74} \times \mathbb{Z}_2$ from Theorem \ref{thm:noncyclic}}
\label{table:A_2A_1}
\end{table}
\begin{table}[h]
\renewcommand{\arraystretch}{1.2}
\centering
\setlength\tabcolsep{3pt}
\begin{tabular}{|c|ccccccc|} 
\hline
$-$ & $(67,0)$&$ (20,1)$&$(47,0) $&$(0,1)$&$ (27,0)$&$ (54,1)	$&$(7,0)$ \\ \hline
$(0,0)$ & $ (7,0)	$&$\textbf{(54,1)}	$&$(27,0)	$&$\textbf{(0,1)}	$&$(47,0)	$&$\textbf{(20,1)} $&$	(67,0)$ \\
$(1,0)$ & $ (8,0)	$&$\textbf{(55,1)}	$&$(28,0)	$&$\textbf{(1,1)}	$&$(48,0)	$&$\textbf{(21,1)}	$&$(68,0)$ \\
$(2,0)$ & $ (9,0)	$&$\textbf{(56,1)}	$&$(29,0)	$&$\textbf{(2,1)}	$&$(49,0)	$&$\textbf{(22,1)}$&$	(69,0)$ \\
$(3,0)$ & $ (10,0)	$&$\textbf{(57,1)}	$&$(30,0)	$&$\textbf{(3,1)}	$&$(50,0)	$&$\textbf{(23,1)}$&$	(70,0)$ \\
$(4,0)$ & $ (11,0)	$&$\textbf{(58,1)}	$&$(31,0)$&$	\textbf{(4,1)}	$&$(51,0)	$&$\textbf{(24,1)}	$&$(71,0)$ \\
$(5,0)$ & $ (12,0)	$&$\textbf{(59,1)}	$&$(32,0)	$&$\textbf{(5,1)}	$&$(52,0)	$&$\textbf{(25,1)}$&$	(72,0)$ \\
$(6,0)$ & $ (13,0)	$&$\textbf{(60,1)}	$&$(33,0) $&$	\textbf{(6,1)}	$&$(53,0)	$&$\textbf{(26,1)}	$&$(73,0)$ \\
\hline
\end{tabular}
	
\caption{$\Delta(A_0,A_2)$ for the construction in $\mathbb{Z}_{74} \times \mathbb{Z}_2$ from Theorem \ref{thm:noncyclic}}
\label{table:A_0A_2}
\end{table}
\end{example}

We now provide the proof of Theorem \ref{thm:noncyclic}.  The reader may find it helpful to consult the tables for the $l=3$ and $l=7$ cases as they navigate the proof.

\begin{proof}
Since $l \equiv 3 \mod 4$, $(3l^2+1)/2$ is even and hence $\mathbb{Z}_{\frac{3l^2+1}{2}}\times\mathbb{Z}_2$ is not a cyclic group.  We will show that $\Delta(A_1,A_0) \cup \Delta(A_2,A_1) \cup \Delta(A_0,A_2)=G \setminus \{0\}$; note that disjointness of the three sets then follows from the fact that $0$ does not occur in the multiset of differences.  Since the difference multiset contains $3l^2$ elements by construction, it will suffice to show that each non-identity group element occurs at least once.\\
For $a, b \in \mathbb{Z}_{\frac{3l^2+1}{2}}$ and $x \in \mathbb{Z}_2$, we will adapt our previous interval notation as follows: we denote the set $\{(a,x), (a+1,x), \ldots, (b,x)\}$ by $[a,b]\times \{x\}$. \\
First, we determine the difference multisets. 
\begin{align*}
\Delta  (A_1,A_0) &= \bigcup_{i=0}^{l-1}\bigcup_{j=0}^{l-1}\{z(i-1)-l-i-j,i+1)\}\\
&= \bigcup_{i=0}^{l-1}[z(i-1)-2l+1-i,z(i-1)-l-i]\times\{i+1\}
\end{align*}
We may view these differences as consisting of $l$ length-$l$ ``runs" of consecutive elements in the first coordinate, which occur horizontally in the difference table, indexed by $i$. Each run has fixed second coordinate, and the parity of the second coordinate alternates as $i$ takes values from $0$ to $l-1$.

Next we have:
\begin{align*}
\Delta  (A_2,A_1) &= \bigcup_{i=0}^{l-1}\bigcup_{j=0}^{l-1}\{(zi-l-z(j-1)+l+j,i-j+1)\}\\
&= \bigcup_{i=0}^{l-1}\bigcup_{j=0}^{l-1}\{(z(i-j+1)+j,i-j+1)\}.
\end{align*}
We let $k = i-j+1$: since $i$ and $j$ vary from $0$ to $l-1$, $k$ takes values from  $-l+2$ to $l$.  We change the indices from $i$ and $j$ to $k$ and $j$.  For $-l+2 \leq k \leq l$, let  $$J_k=\{j: 0 \leq j \leq l-1 \mbox{ and } 0 \leq k+j-1 \leq l-1\}=[0,l-1] \cap [1-k,l-k]$$
so that
$$\Delta  (A_2,A_1) = \bigcup_{k=-l+2}^{l}\bigcup_{j \in J_k} \{(z k+j,k)\}.$$
For $-l+2 \leq k \leq 0$, we have $J_k=[1-k,l-1]$, while for $1 \leq k \leq l$ we have $J_k=[0,l-k].$  Thus we have:
\[\Delta  (A_2,A_1) 
= \bigcup_{k=-l+2}^{0}\bigcup_{j=1-k}^{l-1}\{(zk+j,k)\}\cup\bigcup_{k=1}^{l}\bigcup_{j=0}^{l-k}\{(zk+j,k)\}\]
\[= \bigcup_{k=-l+2}^{0}\left([zk+1-k,zk+l-1]\times\{k\}\right)\cup\bigcup_{k=1}^{l}\left([zk,zk+l-k]\times\{k\}\right)\]
We may view these differences as ``runs" of consecutive elements (in the first coordinate) occurring diagonally in the difference table, indexed by $k$ and ``wrapping around" the table to $k-(l-1)$. The set $J_k$ allows for the adjustment of the length of the differences depending on which diagonal we are considering. Again, we have alternation between $0$ and $1$ in the second coordinate for each run.

Finally:
\begin{align*}
\Delta(A_0,A_2) &=\bigcup_{i=0}^{l-1}\bigcup_{j=0}^{l-1}\{(i-zj+l,j)\}\\
&=\bigcup_{j=0}^{l-1}[-zj+l,-z j+2l-1]\times\{j\}
\end{align*}
These differences we may view as ``runs" of consecutive elements (in the first coordinate) occurring vertically in the difference table, indexed by $j$, with alternation in the second coordinate.

Having obtained expressions for each part of the difference multiset, we now check that the difference multiset has one occurrence of each non-identity element in $\mathbb{Z}_{\frac{3l^2+1}{2}} \times \{0\}$.  We consider the differences corresponding to the following indices, where $ 0 \leq s \leq \frac{l-3}{2}$.  
\begin{itemize}
\item $k=-2s$ in $\Delta(A_2,A_1)$: the difference multiset obtained is
\begin{flalign*}
&[z(-2s)+1+2s,z(-2s)+l-1]\times\{-2s\}\\
&=[(3l-1)s+2s+1,(3l-1)s+l-1]\times\{0\}\\
&=[(3l+1)s+1,(3l-1)s+l-1]\times\{0\}.
\end{flalign*}
Here we have applied the simplification $2z=\frac{3(l^2-2l+1)}{2}=\frac{3l^2+1}{2}+\frac{2-6l}{2} \equiv 1-3l \mod \frac{3l^2+1}{2}$.
\item $j =2s$ in $\Delta(A_0,A_2)$: the difference multiset obtained is
\begin{flalign*}
&[(-z (2s)+l,-z (2s)+2l-1]\times\{2s\}\\
&=[((3l-1)s+l,(3l-1)s+2l-1]\times\{0\}.
\end{flalign*}
\item $k=l-1-2s$ in $\Delta(A_2,A_1)$: since $2 \leq l-1-2s \leq l-1$, 
the difference multiset obtained is
\begin{flalign*}
&[z(l-1-2s),z(l-1-2s)+l-(l-2s-1)]\times\{l-1-2s\}\\
&=[(3l-1)s+z(l-1), (3l-1)s+z(l-1)+2s+1]\times\{0\}\\
&=[(3l-1)s+2l,((3l-1)s+2l+2s+1]\times\{0\}\\
&=[(3l-1)s+2l,(3l+1)s+2l+1]\times\{0\}.
\end{flalign*}
Here we have applied the simplification $z(l-1)=\frac{3}{4}(l-1)^3 \equiv (\frac{l-1}{2})(1-3l)=-(\frac{3l^2+1}{2})+2l \equiv 2l \mod \frac{3l^2+1}{2}$.
\item $i=l-2-2s$ in $\Delta(A_1,A_0)$: the difference multiset obtained is
\begin{flalign*}
&[z(l-3-2s)-2l+1-(l-2-2s),\\
&z(l-3-2s)-l-(l-2-2s)]\times\{l-1-2s\}\\
&=[(3l-1)s + (5l-1)-3l+3+2s),(3l-1)s+(5l-1)-2l+2+2s]\times\{0\}\\
&=[(3l+1)s+2l+2,(3l+1)s+3l+1]\times\{0\}.
\end{flalign*}
\end{itemize}
The union of these is $[(3l+1)s+1,(3l+1)(s+1)]\times\{0\}$.
As $s$ ranges from $ 0 \text{ to } \frac{l-3}{2}$ we obtain one copy of:
\[[1,\frac{3l^2+1}{2}-l-1]\times\{0\}\]
Finally, we take $j=l-1$ in $\Delta(A_0,A_2)$.  This contributes the set of differences 
$$[\frac{3l^2+1}{2}-l,\frac{3l^2+1}{2}-1]\times\{0\}.$$
Now we consider the elements of $\mathbb{Z}_{\frac{3l^2+1}{2}}\times\{1\}$. We will use four sets of indices, of which the first and third set apply only to $l \geq 7$.  We omit the details of the calculations, since they are very similar to those above.

First, we consider the differences corresponding to the following indices, where $0 \leq s \leq \frac{l-7}{4}$ (note these terms do not occur when $l=3$):
\begin{itemize}
    \item $j=\frac{l-1}{2}+2s$ in $\Delta(A_0,A_2)$: differences $[(3l-1)s,(3l-1)s+l-1)]\times\{1\};$
    \item $k=\frac{l-1}{2}-2s$ in $\Delta(A_2,A_1)$: differences $[(3l-1)s+l,(3l+1)s+\frac{3l+1}{2}]\times\{1\};$
    \item $i=\frac{l-3}{2}-2s$ in $\Delta(A_1,A_0)$: differences $[(3l+1)s+\frac{3l+3}{2},(3l+1)s+\frac{5l+1}{2}]\times\{1\};$
    \item $k=\frac{-l-3}{2}-2s$ in $\Delta(A_2,A_1)$: differences $[(3l+1)s+\frac{5l+3}{2},(3l-1)s+3l-2]\times\{1\}.$
\end{itemize}
The union of these is $[(3l-1)s,(3l-1)(s+1)-1]\times\{1\}.$ As $s$ ranges from $0$ to $\frac{l-7}{4}$ we obtain one copy of:
\[[0,\frac{3l^2-10l-1}{4}]\times\{1\}\]
Next we take $j=l-2$ in $\Delta(A_0,A_2)$, $k=1$ in $\Delta(A_2,A_1)$, and $i= 0$ in $\Delta(A_1,A_0)$; these contribute the differences:
\[[\frac{3l^2-10l+3}{4},\frac{3l^2-6l-1}{4}]\times\{1\};\]
\[[\frac{3l^2-6l+3}{4},\frac{3l^2-2l-1}{4}]\times\{1\};\]
\[[\frac{3l^2-2l+3}{4},\frac{3l^2+2l-1}{4}]\times\{1\};\]
respectively.  (Note that, for $l=3$, this range of differences is $[0,8]\times\{1\}$.)

We now take the differences corresponding to the following indices, where $0 \leq s \leq \frac{l-7}{4}$ (as in the first case, note these terms do not occur when $l=3$):
\begin{itemize}
    \item $k=l-2s$ in $\Delta(A_2,A_1)$: differences $[(3l-1)s+\frac{3l^2+2l+3}{4},(3l+1)s+\frac{3l^2+2l+3}{4}]\times\{1\};$
    \item $i=l-1-2s$ in $\Delta(A_1,A_0)$: differences $[(3l+1)s+\frac{3l^2+2l+7}{4},(3l+1)s+\frac{3l^2+6l+3}{4}]\times\{1\};$
    \item $k=-1-2s$ in $\Delta(A_2,A_1)$: differences $[(3l+1)s+\frac{3l^2+6l+7}{4},(3l-1)s+\frac{3l^2+10l-5}{4}]\times\{1\};$
    \item $j=1+2s$ in $\Delta(A_0,A_2)$: differences $[(3l-1)s+\frac{3l^2+10l-1}{4},(3l-1)s+\frac{3l^2+14l-5}{4}]\times\{1\}.$
\end{itemize}
The union of these is $[(3l-1)s+\frac{3l^2+2l+3}{4},(3l-1)(s+1)+\frac{3l^2+2l-1}{4}]\times\{1\}$
As s ranges from $0$ to $\frac{l-7}{4}$ this gives one copy of:
\[[\frac{3l^2+2l+3}{4},\frac{3l^2-4l+1}{2}]\times\{1\}\]

Finally we take $k=\frac{l+3}{2}$ in $\Delta(A_2,A_1)$, $i = \frac{l+1}{2}$ in $\Delta(A_1,A_0)$ and $k=\frac{-l+1}{2}$ in $\Delta(A_2,A_1)$.  These contribute the differences:
\[[\frac{3l^2-4l+3}{2},\frac{3l^2-3l}{2}]\times\{1\};\]
\[[\frac{3l^2-3l+2}{2},\frac{3l^2-l}{2}]\times\{1\};\]
\[[\frac{3l^2-l+2}{2},\frac{3l^2+1}{2}-1]\times\{1\};\]
respectively.  (Note that, for $l=3$, this range of differences is $[9,13]\times\{1\}$.)

We have shown that $\Delta(A_1,A_0) \cup \Delta(A_2,A_1) \cup \Delta(A_0,A_2)$ comprises one copy of each non-identity element in $\mathbb{Z}_{\frac{3l^2+1}{2}}\times\mathbb{Z}_2$, and so $(A_0,A_1,A_2)$ form a $(3l^2+1,3,l,1)$-CEDF in $\mathbb{Z}_{\frac{3l^2+1}{2}}\times\mathbb{Z}_2.$
\end{proof}

We next present the first example of a CEDF in a non-abelian group.  This was found by computational search in GAP \cite{GAP} using a constraint-satisfaction technique.

\begin{example}
Denote by $D_{28}$ the dihedral group with the standard presentation $\langle r,s: \mathrm{ord}(r)=14, \mathrm{ord}(s)=2, srs=r^{-1} \rangle$.  Consider the following subsets of $D_{28}$:  
\begin{itemize}
\item $A_0=\{id, r^{11}, r^8\}$
\item $A_1=\{ r^4,sr^2, sr^6\}$
\item $A_2=\{r^3,r^5, sr^4\}$.
\end{itemize}
Then $(A_0,A_1,A_2)$ is a $(28,3,3,1)$-CEDF in ${D}_{28}$.
It can be verified directly that
\begin{itemize}
\item $\Delta(A_1,A_0)=\{r^4,r^{10},r^7,sr^2, sr^8, sr^5, sr^6, sr^{12}, sr^9\}$
\item $\Delta(A_2,A_1)=\{r^{13}, sr^{13}, sr^3, r, sr^{11}, sr, s, r^{12}, r^2\}$
\item $\Delta(A_0,A_2)=\{r^{11}, r^9, sr^4, r^8, r^6, sr^{7}, r^5, r^3, sr^{10}\}$.
\end{itemize}
\end{example}
Observe that this non-abelian CEDF has the same parameters as the abelian CEDF obtained from Theorem \ref{thm:noncyclic} in $\mathbb{Z}_{14} \times \mathbb{Z}_2$ when $l=3$.
\end{subsection}

\begin{subsection}{Inequivalent CEDFs in cyclic groups}

In \cite{PatSti2}, a construction is given for $(ml^2+1,m,l,1)$-CEDFs in the cyclic group $\mathbb{Z}_{ml^2+1}$ using $\alpha$-valuations, in the case when $m$ is even.  In this section, we define the notion of equivalence for CEDFs and exhibit a CEDF construction in cyclic groups which provides inequivalent CEDFs with the same parameters.

\begin{definition}
Let $G$ be an abelian group. Let $\mathcal{A}=(A_0,\ldots, A_{m-1})$ and $\mathcal{B}=(B_0,\ldots, B_{m-1})$ be two $(n,m,l,\lambda)$-$c$-CEDFs in $G$. We shall say that $\mathcal{A}$ is equivalent to $\mathcal{B}$ if there exist an automorphism $\sigma$ of $G$ and an element $\beta \in G$ such that, for all $0 \leq i \leq m-1$, $B_{i+c \mod m}=\sigma(A_i) + \beta$ for some $c \in \{0,\ldots,m-1\}$.  If $G=\mathbb{Z}_n$, this simplifies to the following definition: $\mathcal{A}$ is equivalent to $\mathcal{B}$ if there exist $\alpha, \beta \in \mathbb{Z}_n$, where $\alpha$ is a unit of the ring $\mathbb{Z}_n$, such that for all $0 \leq i \leq m-1$, $B_{i+c \mod m}=\alpha A_i + \beta$ for some $c \in \{0,\ldots,m-1\}$.
\end{definition}

For a subset $D$ of a group $G$, we define the multiset of internal differences by $\Delta(D)=\{x-y: x \neq y \in D\}$.  Recall that for subsets $A,B$ of $G$, the multiset $\Delta(A,B)$ is given by $\{x-y: x \in A, y \in B\}$. Note that $\Delta(A,A)=\Delta(A)+ |A| \{0\}$.

We will need the following prior result, which we state without proof.

\begin{lemma}\label{lem:intdiffs}
Let $n \in \mathbb{N}$ and $G=\mathbb{Z}_n$.
\begin{itemize}
\item[(i)]  For an interval $I=[0,k]$ in $G$ with $k<n/2$, the maximum multiplicity of an element in $\Delta(I,I)$ is $k+1$ (attained by element $0$).
\item[(ii)] For an interval $I=[0,k]$ in $G$ with $k<n/2$, the maximum multiplicity of an element in $\Delta(I)$ is $k$ (attained by elements $\pm 1$).
\item[(iii)] For a subset $A$ of $G$ and $\alpha \in G$, the following hold:
\begin{itemize}
 \item [1)] $\Delta(A) = \Delta(A+\alpha)$ and $\Delta(A,A) = \Delta(A+\alpha, A+ \alpha)$ 
\item [(2)] $\Delta(\alpha A) = \alpha (\Delta(A))$ and $\Delta(\alpha A, \alpha A) = \alpha (\Delta(A,A))$
\item [(3)] $\Delta(A+\alpha, A) =\Delta(A,A)+\alpha$.
\end{itemize}
\end{itemize}
\end{lemma}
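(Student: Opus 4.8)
The plan is to handle parts (i) and (ii) by a single direct counting argument and to dispatch part (iii) by exhibiting explicit difference-preserving bijections. For (i), I would first record the \emph{no-wraparound} observation that is the real engine of the statement: writing $I=\{0,1,\ldots,k\}$, every integer difference $a-b$ with $a,b\in I$ lies in the range $\{-k,\ldots,k\}$, a block of $2k+1$ consecutive integers whose extreme values differ by $2k$. Since $k<n/2$ gives $2k<n$, no two of these integers are congruent modulo $n$, so the range injects into $\mathbb{Z}_n$. Consequently the multiplicity of a residue $d$ in $\Delta(I,I)$ equals the number of ordered pairs $(a,b)\in I\times I$ with $a-b=t$, where $t\in\{-k,\ldots,k\}$ is the unique integer representative of $d$. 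A one-line count gives this number as $k+1-|t|$: for $t\ge 0$ the solutions are $(b+t,b)$ with $0\le b\le k-t$, and $t<0$ is symmetric. This is maximised at $t=0$, giving maximum multiplicity $k+1$ attained by the element $0$.

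For (ii), I would use that the internal difference multiset $\Delta(I)$ is obtained from $\Delta(I,I)$ by deleting exactly the $k+1$ diagonal pairs $(a,a)$; by the identity $\Delta(A,A)=\Delta(A)+|A|\{0\}$ recorded just before the lemma, these contribute precisely $k+1$ copies of $0$ and nothing else. Hence the multiplicity of every nonzero residue is unchanged, still equal to $k+1-|t|$, and the maximum over $t\neq 0$ occurs at $|t|=1$, yielding multiplicity $k$ at the elements $\pm 1$.

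For (iii), each identity reduces to a bijection of the index pairs. For (1), translation $x\mapsto x+\alpha$ is a bijection $A\to A+\alpha$ with $(x+\alpha)-(y+\alpha)=x-y$, giving the claimed multiset equalities for both $\Delta$ and $\Delta(\,\cdot\,,\cdot\,)$. For (2), when $\alpha$ is a unit of $\mathbb{Z}_n$ multiplication by $\alpha$ is an automorphism of $G$, so $x\mapsto\alpha x$ is a bijection $A\to\alpha A$ with $\alpha x-\alpha y=\alpha(x-y)$, yielding $\Delta(\alpha A)=\alpha\,\Delta(A)$ and likewise for $\Delta(\alpha A,\alpha A)$. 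For (3), shifting only the first argument sends the pair $(x,y)$ to difference $(x-y)+\alpha$, so $\Delta(A+\alpha,A)=\Delta(A,A)+\alpha$ as multisets.

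The only genuinely non-routine point is the no-wraparound observation underlying (i) and (ii): once one checks that $k<n/2$ is \emph{exactly} the condition forcing the integer differences to inject into $\mathbb{Z}_n$, the extremal multiplicities and their locations follow immediately from elementary counting, and the identities in (iii) are purely formal. I therefore expect essentially all of the care to go into that injectivity step, with the rest being bookkeeping.
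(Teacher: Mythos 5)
Your proof is correct, but there is nothing in the paper to compare it against: the authors introduce this lemma with the words ``which we state without proof,'' citing it as a known prior result. What you have written is precisely the routine verification they chose to omit, and it is sound. You correctly isolate the one non-formal ingredient, namely that $k<n/2$ forces the integer differences $\{-k,\ldots,k\}$ to inject into $\mathbb{Z}_n$ (any two differ by at most $2k<n$), after which the multiplicity count $k+1-|t|$ gives (i), and deleting the $k+1$ diagonal zeros via the identity $\Delta(A,A)=\Delta(A)+|A|\{0\}$ (recorded in the paper just before the lemma) gives (ii), with the maximum over nonzero residues attained at $t=\pm 1$. One point worth flagging: in (iii)(2) you quietly strengthen the hypothesis to $\alpha$ a unit of $\mathbb{Z}_n$, whereas the lemma as stated allows any $\alpha\in G$. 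Your caution is in fact warranted, since the statement is false as literally phrased when multiplication by $\alpha$ fails to be injective on $A$: for $n=4$, $\alpha=2$, $A=\{0,2\}$ one has $\alpha A=\{0\}$, so $\Delta(\alpha A)=\emptyset$ while $\alpha\,\Delta(A)=\{0,0\}$. Injectivity of $x\mapsto\alpha x$ on $A$ is exactly what your bijection argument needs, and unitality guarantees it. This restriction costs nothing for the paper: the lemma's only application of (iii)(2), in the proof of Theorem \ref{thm:m=4}, takes $\alpha=l$ in $\mathbb{Z}_{4l^2+1}$, and $\gcd(l,4l^2+1)=1$ since $4l^2+1\equiv 1 \bmod l$. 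Apart from noting this hypothesis (or stating it as injectivity on $A$), your write-up needs no changes; only the trivial edge case $k=0$ in (ii), where $\Delta(I)$ is empty, might merit a parenthetical remark.
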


\begin{theorem}\label{thm:m=4}
Let $l \in \mathbb{N}$ and let $d$ be a divisor of $l$.  Define $\mathcal{A}=(A_0,A_1,A_2,A_3)$ to be the following (ordered) collection of sets in $\mathbb{Z}_{4l^2+1}$:
\begin{itemize}
\item $A_0=\{i: 0 \leq i \leq l-1\}$;
\item $A_1=\bigcup_{k=0}^{d-1}\{\frac{l^2(2k+1)}{d}+(i+1)l: 0 \leq i \leq \frac{l}{d}-1\}$;
\item $A_2=A_0+\frac{l^2}{d}=\{\frac{l^2}{d}+i:0 \leq i \leq l-1\}$;
\item $A_3=A_1+2l^2=\bigcup_{k=0}^{d-1}\{\frac{l^2(2(k+d)+1)}{d}+(i+1)l: 0 \leq i \leq \frac{l}{d}-1\}$.
\end{itemize}
Then
\begin{itemize}
\item[(i)] $\mathcal{A}$ is a $(4l^2+1,4,l,1)$-CEDF in $\mathbb{Z}_{4l^2+1}$.
\item[(ii)] For any two distinct proper divisors $d_1, d_2$ of $l$ such that $l \neq d_1 d_2$, the two $(4l^2+1,4,l,1)$-CEDFs obtained in (i) are non-equivalent. In particular, this guarantees at least two non-equivalent $(4l^2+1,4,l,1)$-CEDFs for any composite $l$.
\end{itemize}
\end{theorem}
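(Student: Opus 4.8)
The plan is to handle the two parts separately: part (i) by a direct computation of the four cyclic difference multisets, and part (ii) by an equivalence invariant that separates the constructions.

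For part (i), write $B=l^2/d$ and $L=l/d$, so that $A_0$ and $A_2$ are translates of the interval $[0,l-1]$, while each of $A_1$ and $A_3$ is a disjoint union of $d$ arithmetic progressions (``blocks'') of length $L$ and common difference $l$, the $k$-th block lying in the window $[(2k+1)B,(2k+2)B]$. The $1$-CEDF condition asks that $\Delta(A_1,A_0)\cup\Delta(A_2,A_1)\cup\Delta(A_3,A_2)\cup\Delta(A_0,A_3)=\mathbb{Z}_{4l^2+1}\setminus\{0\}$; since this multiset has exactly $4l^2=|G|-1$ elements, it suffices to partition $[1,4l^2]$ into $4d$ consecutive length-$B$ intervals (blocks) and show each block is covered exactly once. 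First I would compute each of the four multisets, using Lemma \ref{lem:intdiffs}(iii) in the form $\Delta(X+\alpha,Y+\beta)=\Delta(X,Y)+(\alpha-\beta)$ together with the elementary fact that the difference set of a length-$l$ interval with a length-$L$, step-$l$ progression is a single contiguous run of $Ll=B$ consecutive values, each occurring once. This makes each of the four multisets a multiplicity-one union of $d$ length-$B$ blocks aligned exactly to the grid.

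The bookkeeping then shows, labelling the blocks $1,\dots,4d$ along $[1,4l^2]$, that $\Delta(A_1,A_0)$ occupies the even blocks $2,4,\dots,2d$; $\Delta(A_2,A_1)$ the even blocks $2d+2,\dots,4d$; $\Delta(A_3,A_2)$ the odd blocks $2d+1,\dots,4d-1$; and $\Delta(A_0,A_3)$ the odd blocks $1,3,\dots,2d-1$. Together these are all $4d$ blocks, each once, so the union is $[1,4l^2]$ with multiplicity one, proving (i). I expect the main obstacle here to be organizational rather than conceptual: correctly tracking the ``wraparound'' modulo $4l^2+1$ for $\Delta(A_2,A_1)$ and $\Delta(A_0,A_3)$, whose raw differences are negative, and confirming that the four block-sets are pairwise disjoint. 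The tables for $l=3$ and $l=7$ (and a quick check of $l=4$, $d=2$ in $\mathbb{Z}_{65}$) serve as sanity checks on the block pattern.

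For part (ii) the plan is to produce an equivalence invariant. For a subset $D$ let $\mu(D)$ denote the maximum multiplicity of a nonzero element of $\Delta(D)$. If $\mathcal{B}_{i+c}=\alpha A_i+\beta$ with $\alpha$ a unit, then by Lemma \ref{lem:intdiffs}(iii)(1)--(2) we have $\Delta(\mathcal{B}_{i+c})=\Delta(\alpha A_i+\beta)=\alpha\,\Delta(A_i)$, and since $x\mapsto\alpha x$ is a bijection of $\mathbb{Z}_{4l^2+1}$ it preserves all multiplicities, giving $\mu(\mathcal{B}_{i+c})=\mu(A_i)$. Hence the multiset $\{\mu(A_0),\mu(A_1),\mu(A_2),\mu(A_3)\}$ is invariant under equivalence, and it suffices to show this multiset differs for $d_1$ and $d_2$.

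It remains to evaluate the invariant. Since $A_0,A_2$ are translates of $[0,l-1]$, Lemma \ref{lem:intdiffs}(ii) gives $\mu(A_0)=\mu(A_2)=l-1$. The key step is $\mu(A_1)$ (and $\mu(A_3)=\mu(A_1)$, as $A_3=A_1+2l^2$): I would split $\Delta(A_1)$ into within-block differences, the multiples $jl$ with $1\le |j|\le L-1$, of multiplicity $d(L-|j|)$ and maximum $d(L-1)=l-d$; and cross-block differences $2\delta B+el$ with $1\le|\delta|\le d-1$ and $|e|\le L-1$, of multiplicity $(d-|\delta|)(L-|e|)$ and maximum $(d-1)L=l-L$. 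Because $|jl|<B$ while every cross-block value lies within $B$ of a nonzero multiple of $2B$, the two families occupy disjoint group elements (and since $A_1\subseteq[0,2l^2]$ there is no wraparound), so $\mu(A_1)=\max(l-d,\,l-L)=l-\min(d,l/d)$. Thus the invariant multiset is $\{\,l-1,\;l-1,\;l-\min(d,l/d),\;l-\min(d,l/d)\,\}$. Finally, a short number-theoretic observation finishes the argument: since each pair $\{d,l/d\}$ of complementary divisors shares the common value $\min(d,l/d)\le\sqrt{l}$, one has $\min(d_1,l/d_1)=\min(d_2,l/d_2)$ if and only if $\{d_1,l/d_1\}=\{d_2,l/d_2\}$, i.e.\ $d_1=d_2$ or $d_1d_2=l$. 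Hence under the hypotheses $d_1\neq d_2$ and $d_1d_2\neq l$ the two invariant multisets differ, so the CEDFs are non-equivalent; the ``in particular'' claim follows by taking $d_1=1$ and $d_2=p$ for a prime $p\mid l$, since then $d_1d_2=p\neq l$ for composite $l$. I expect the cross-block multiplicity count for $A_1$, and verifying that within- and cross-block differences never coincide, to be the main obstacle in (ii).
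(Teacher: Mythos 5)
Your proposal follows essentially the same route as the paper's own proof: for (i) the paper likewise writes each of the four difference multisets as a multiplicity-one union of $d$ intervals of length $l^2/d$ and arrives at exactly your block pattern (even blocks $2,\dots,2d$ and $2d+2,\dots,4d$ for $\Delta(A_1,A_0)$ and $\Delta(A_2,A_1)$, odd blocks $1,\dots,2d-1$ and $2d+1,\dots,4d-1$ for $\Delta(A_0,A_3)$ and $\Delta(A_3,A_2)$), and for (ii) it uses the same equivalence invariant --- the maximum multiplicity of a nonzero internal difference of each set --- computed as $\max(l-d,\,l-l/d)=l-\min(d,l/d)$ via the same within-block/cross-block split, with your explicit ``iff'' justification that $\min(d,l/d)$ determines $\{d,l/d\}$ merely making precise a step the paper asserts. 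The only point the paper treats explicitly that you leave implicit is the pairwise disjointness of $A_0,A_1,A_2,A_3$ (required by the CEDF definition and, for the non-adjacent pairs $\{A_0,A_2\}$ and $\{A_1,A_3\}$, not a consequence of the multiset equation), which the paper settles by the ordering $A_0 < A_2 < A_1 < A_3$ and which follows at once from the window locations you already record.
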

\begin{proof}
For (i), we order the elements of $\mathbb{Z}_{4l^2+1}$ in the usual way as $0< \cdots <4l^2$.  Since $d(l-1)<l^2$ (as $d|l$), we have that all elements of $A_0$ precede all elements of $A_2$ (which lie between $l^2/d$ and $l^2/d+l-1$), which in turn precede all elements of $A_1$ (which lie between $l^2/d+l$ and $2d(l^2/d)=2l^2$), which in turn precede all elements of $A_3$ (which lie between $(2d+1)l^2/d+l$ and $4l^2$).  All four sets are pairwise disjoint and have no internal repetition of elements.
We now show that the multiset equation $\cup_{i=0}^{3} \Delta(A_{i+1 \mod 4}, A_i)= (\mathbb{Z}_{4l^2+1} \setminus \{0\})$ holds for $\mathcal{A}$.  We will show that $\Delta(A_1,A_0) \cup \Delta(A_0,A_3)=[1,2l^2]$ and $\Delta(A_2,A_1) \cup \Delta(A_3,A_2)=[2l^2+1,4l^2]$.
\begin{align*}
\Delta(A_1,A_0) &= \bigcup_{k=0}^{d-1}\{\frac{l^2(2k+1)}{d}+(i+1)l-j\}: 0 \leq i \leq \frac{l}{d}-1, 0 \leq j \leq l-1 \}\\
&=\bigcup_{k=0}^{d-1}\bigcup_{i=0}^{\frac{l}{d}-1}\bigcup_{j=0}^{l-1}\{\frac{l^2(2k+1)}{d}+(i+1)l -j \}\\
&=\bigcup_{k=0}^{d-1}\bigcup_{i=0}^{\frac{l}{d}-1}[\frac{l^2(2k+1)}{d}+il+1, \frac{l^2(2k+1)}{d}+(i+1)l ]\\
&=\bigcup_{k=0}^{d-1} [\frac{l^2(2k+1)}{d}+1, \frac{l^2(2k+2)}{d}]
\end{align*}
Using the fact that $-2l^2 \equiv 2l^2+1 \mod 4l^2+1$, we have
\begin{align*}
\Delta(A_0,A_3) &=\Delta(A_0,A_1+2l^2)\\
&=-\Delta(A_1,A_0)-2l^2\\
&= 2l^2+1 - \Delta(A_1,A_0)\\
&= 2l^2+1 + \bigcup_{k=0}^{d-1} [-\frac{l^2(2k+2)}{d},-\frac{l^2(2k+1)}{d}-1]\\
&=\bigcup_{k=0}^{d-1} [\frac{l^2(2d-2k-2)}{d}+1, \frac{l^2(2d-2k-1)}{d}]\\
&=\bigcup_{k=0}^{d-1} [\frac{l^2(2k)}{d}+1, \frac{l^2(2k+1)}{d}]
\end{align*}
where in the final step we take $d-1-k$ instead of $k$.

Hence 
$$\Delta(A_1,A_0) \cup \Delta(A_0,A_3) =\bigcup_{k=0}^{d-1} [\frac{l^2(2k)}{d}+1, \frac{l^2(2k+2)}{d}]=[1,2l^2].$$
Next,
\begin{align*}
\Delta(A_2,A_1) &=\Delta(A_0+\frac{l^2}{d},A_1)\\
&=-\Delta(A_1+,A_0)+\frac{l^2}{d}\\
&=-\bigcup_{k=0}^{d-1} [\frac{l^2(2k+1)}{d}+1, \frac{l^2(2k+2)}{d}]+\frac{l^2}{d}\\
&=\bigcup_{k=0}^{d-1} [\frac{l^2(-2k)}{d}-1, \frac{l^2(-2k-1)}{d}]\\
\end{align*}
Now we add $4l^2+1$ to make the differences positive; note this reverses the direction of the union.
\begin{align*}
&=\bigcup_{k=0}^{d-1} [\frac{l^2(4d-2k-1)}{d}+1, \frac{l^2(4d-2k)}{d}]\\
&=\bigcup_{k=0}^{d-1} [\frac{l^2(2(k+d)+1)}{d}+1, \frac{l^2(2(k+d)+2)}{d}]
\end{align*}
where again in the final step we take $d-1-k$ instead of $k$.
\begin{align*}
\Delta(A_3,A_2) &=\Delta(A_1+2l^2,A_0+\frac{l^2}{d})\\
&=\Delta(A_1,A_0)+2l^2-\frac{l^2}{d}\\
&=\bigcup_{k=0}^{d-1} [\frac{l^2(2k+1)}{d}+1, \frac{l^2(2k+2)}{d}]+2l^2-\frac{l^2}{d}\\
&=\bigcup_{k=0}^{d-1} [\frac{l^2 (2(k+d))}{d}+1, \frac{l^2 (2(k+d)+1)}{d}]\\
\end{align*}
Thus
$$\Delta(A_2,A_1) \cup \Delta(A_3,A_2) =\bigcup_{k=0}^{d-1} [\frac{2l^2(k+d)}{d}+1, \frac{2l^2(k+d+1)}{d}]=[2l^2+1,4l^2]$$
which completes the proof of (i).

For part (ii), let $\mathcal{C}^d=(A_0^{d},A_1^{d},A_2^{d},A_3^{d})$ be the CEDF from part (i) corresponding to divisor $d$ of $l$.  Let $d_1,d_2$ be distinct proper divisors of $l$. If $\mathcal{C}^{d_1}$ and $\mathcal{C}^{d_2}$ were equivalent, the sets of $\mathcal{C}^{d_1}$ could be mapped onto those of $\mathcal{C}^{d_2}$ via a mapping which would preserve the list of multiplicities of the internal differences of each set.   Consider $A_0^d=[0,l-1]$ and $A_2^d=[0,l-1]+\frac{l^2}{d}$ in $\mathbb{Z}_{4l^2+1}$; by Lemma \ref{lem:intdiffs}, the maximum multiplicity of an element in $\Delta(A_0^d)$ or $\Delta(A_2^d)$ is $l-1$.  We will show that, for $A_1^d=\cup_{k=0}^{d-1} (\frac{(2k+1)l^2}{d} +l[1,l/d])$ and $A_3^d=2l^2+A_1^d$, the maximum multiplicity of an element in $\Delta(A_1^d)$ or $\Delta(A_3^d)$ is $\mathrm{max}(l-d,l-\frac{l}{d})$. Note that (since we consider only proper divisors $d$ of $l$) this is $l-1$ precisely if $d=1$.  For distinct proper divisors $d_1,d_2$ of $l$ such that $l\neq d_1 d_2$, the maximum multiplicity of an element in $\{\Delta(A_1^{d_1}), \Delta(A_3^{d_1})\}$ is different from that of an element in  $\{\Delta(A_1^{d_2}), \Delta(A_3^{d_2})\}$.   This implies that it is not possible to map $\mathcal{C}^{d_1}$ onto $\mathcal{C}^{d_2}$ as described, and so these CEDFs are not equivalent. For any composite $l$, we can take $d_1=1$ and $d_2$ any prime divisor of $l$ to obtain two non-equivalent examples.

We now prove the above claim. First consider $A_1$.  Define $J=l([0,\frac{l}{d}-1]+1)$ and $B_k=\{\frac{l^2(2k+1)}{d}+J\}$, so that $A_1=\cup_{k=0}^{d-1} B_k$ and $\Delta(A_1,A_1)=\cup_{0 \leq k_1,k_2 \leq d-1} \Delta(B_{k_2},B_{k_1})$. Observe that $\Delta(B_{k_2},B_{k_1})=\frac{2l^2(k_2-k_1)}{d}+\Delta(J,J)$.  By Lemma \ref{lem:intdiffs}, $\Delta(J,J)=l \Delta([0,\frac{l}{d}], [0,\frac{l}{d}])$. So
\begin{equation}\label{eqn:intdiff}
\Delta(A_1,A_1)=\bigcup_{0 \leq k_1,k_2 \leq d-1}\{\frac{2l^2(k_2-k_1)}{d}+\Delta(J,J)\}.
\end{equation}
All elements of $\Delta(J,J)$ lie within $l[-\frac{l}{d}+1,\frac{l}{d}-1]$ and so all elements of $\Delta(B_{k_2},B_{k_1})$ lie within $[(2(k_2-k_1)-1)\frac{l^2}{d}+l, (2(k_2-k_1)+1)\frac{l^2}{d}-l]$.  It is clear that these multisets do not overlap for distinct values of $k_2-k_1$; moreover since the interval corresponding to $k_2-k_1=d-1$ is $[(2d-3)\frac{l^2}{d}+l,(2d-1)\frac{l^2}{d}-l]$ while that corresponding to $k_2-k_1=-(d-1)$ is $[(2d+1)\frac{l^2}{d}+l+1,(2d+3)\frac{l^2}{d}-l+1]$ there is no ``wraparound" modulo $4l^2+1$ in the subtraction table for $\Delta(A_1,A_1)$. In $\Delta(B_{k_2},B_{k_1})$, the maximum multiplicity of a nonzero element is $\frac{l}{d}$ if $k_1 \neq k_2$, and $\frac{l}{d}-1$ if $k_1=k_2$.

In order to determine the maximum multiplicity of an element in $\Delta(A_1)$, we consider the maximum multiplicity of a nonzero element in $\Delta(A_1,A_1)$.  By Equation (\ref{eqn:intdiff}), we must consider the multiset $\{k_2-k_1: 0 \leq k_1,k_2 \leq d-1\}$, i.e. $\Delta([0,d-1],[0,d-1])$. Applying Lemma \ref{lem:intdiffs} to this multiset, element $0$ attains maximum multiplicity $d$ while element $1$ attains multiplicity $d-1$, the maximum multiplicity in $\Delta([0,d-1])$.  From the $d$ multisets $\Delta(B_{k_2,k_1})$ with $k_2-k_1=0$, i.e. $k_1=k_2$, the maximum possible multiplicity for a nonzero element is $d (\frac{l}{d}-1)=l-d$. From the $d-1$ multisets $\Delta(B_{k_2,k_1})$ with $k_2-k_1=1$, the maximum possible multiplicity for a nonzero element is $(d-1)\frac{l}{d}=l-\frac{l}{d}$ (and clearly all other multiplicities of nonzero elements arising from the $k_1 \neq k_2$ case do not exceed this). Hence the maximum occurrence of a nonzero element is $\mathrm{max}(l-d,l-\frac{l}{d})$.
\end{proof}

\begin{example}
Let $m=4$ and $l=4$; here $G=\mathbb{Z}_{65}$. For $d=1$, the sets from Theorem \ref{thm:m=4}  are $A_0=\{0,1,2,3\}, A_1=\{20,24,28,32\}, A_2=\{16,17,18,19\}$ and $A_3=\{52,56,60,64\}$.  For $d=2$, the sets are $A_0=\{0,1,2,3\}, A_1=\{12,16,28,32\}, A_2=\{8,9,10,11\}$ and $A_3=\{44,48,60,64\}$. 
\end{example}
\end{subsection}

\end{subsection}

\begin{subsection}{EDFs defined by undirected cycles}

In this section, we consider EDFs defined by undirected cycles.  By Theorem \ref{general:dir_to_undir}, any CEDF yields an EDF defined by an undirected cycle, while by Theorem \ref{general:ij=ji}, an EDF  $(A_1,\ldots, A_m)$ defined by a directed cycle which satisfies the extra condition $\Delta(A_i,A_j)=\Delta(A_j,A_i)$ may be used to obtain a CEDF.  However, not all $C_m$-defined EDFs correspond to CEDFs.

We first note the following, which is straightforward to prove.
\begin{theorem}
Let $q=ef+1$ be a prime power.  Then the sequence $(C_0^e,C_1^e, \ldots, C_{e-1}^e)$ of all cyclotomic classes of order $e$ in $\mathrm{GF}(q)$ forms a $(q,e,f,f)$-CEDF.
\end{theorem}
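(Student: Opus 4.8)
The plan is to exploit two structural features of the cyclotomic classes: that multiplication by the primitive element $\alpha$ cyclically permutes them, and that multiplication by any nonzero element of $\mathrm{GF}(q)$ is an automorphism of the additive group. Since $C_0^e=\langle \alpha^e\rangle$, we have $C_i^e=\alpha^i C_0^e$ and $C_{i+1 \bmod e}^e=\alpha\, C_i^e$ for every $i$. Because $x\mapsto \alpha^i x$ is additive and bijective, $\Delta(\alpha^i A,\alpha^i B)=\alpha^i\Delta(A,B)$, so writing $D:=\Delta(C_1^e,C_0^e)$ I would reduce the defining multiset of the CEDF to
\[
M:=\bigcup_{i=0}^{e-1}\Delta(C_{i+1 \bmod e}^e,C_i^e)=\bigcup_{i=0}^{e-1}\alpha^i D .
\]
Here $D$ is a multiset of $f^2$ nonzero elements (nonzero because $C_0^e$ and $C_1^e$ are disjoint), so $|M|=ef^2=f(q-1)$, which matches the count required for a $(q,e,f,f)$-CEDF; it therefore remains only to prove uniformity.

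The key step is to show that the multiplicity of an element $g$ in $D$ depends only on the cyclotomic class containing $g$. For $g\in C_k^e$, this multiplicity is $|\{b\in C_0^e : g+b\in C_1^e\}|$; substituting $c=g^{-1}b$, which ranges bijectively over $C_{-k \bmod e}^e$ as $b$ ranges over $C_0^e$, converts the condition $g+b\in C_1^e$ into $1+c\in C_{1-k \bmod e}^e$. Hence the multiplicity equals $m_k:=|\{c\in C_{-k \bmod e}^e : 1+c\in C_{1-k \bmod e}^e\}|$, a quantity depending only on $k$ and not on the individual element $g$.

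The proof then closes by symmetrization. For $g\in C_k^e$, the element $\alpha^{-i}g$ lies in $C_{k-i \bmod e}^e$, so its multiplicity in $D$ is $m_{k-i \bmod e}$, giving
\[
\mathrm{mult}_M(g)=\sum_{i=0}^{e-1}\mathrm{mult}_D(\alpha^{-i}g)=\sum_{i=0}^{e-1}m_{k-i \bmod e}=\sum_{j=0}^{e-1}m_j ,
\]
which is independent of $k$; thus every nonzero element of $\mathrm{GF}(q)$ occurs equally often in $M$. Counting $|D|$ class by class gives $f\sum_{j}m_j=f^2$, so $\sum_j m_j=f$ and $M=f(\mathrm{GF}(q)\setminus\{0\})$. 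The only genuinely non-automatic step is the class-invariance of the multiplicities in $D$; the cyclic symmetrization and the final count are then routine, and notably the argument never requires the explicit values of the cyclotomic numbers.
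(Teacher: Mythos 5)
Your proof is correct and complete. Note that the paper itself offers no argument for this statement --- it is introduced with ``We first note the following, which is straightforward to prove'' and only illustrated by Example 3.4 --- so there is no paper proof to compare against; your write-up supplies exactly the routine cyclotomic argument the authors had in mind. All the key steps check out: $C_{i+1 \bmod e}^e=\alpha C_i^e$ gives $\Delta(C_{i+1 \bmod e}^e,C_i^e)=\alpha^i D$ with $D=\Delta(C_1^e,C_0^e)$; your substitution $c=g^{-1}b$ correctly shows $\mathrm{mult}_D(g)=m_k$ depends only on the class $C_k^e$ containing $g$ (the degenerate case $c=-1$, where $1+c=0$ lies in no class, simply fails the condition and causes no trouble); and the counting $f\sum_j m_j=|D|=f^2$ pins down $\sum_j m_j=f$ without ever evaluating a cyclotomic number. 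One small streamlining worth knowing: the class-invariance can be obtained with no computation at all by observing that $D$ is invariant under multiplication by the subgroup $C_0^e$ (since $uC_1^e=C_1^e$ and $uC_0^e=C_0^e$ for $u\in C_0^e$), so $D$ is automatically a multiset union $\bigcup_k m_k\, C_k^e$; then $\bigcup_{i=0}^{e-1}\alpha^i C_k^e$ covers each cyclotomic class exactly once, giving $M=\bigl(\sum_k m_k\bigr)\bigl(\mathrm{GF}(q)\setminus\{0\}\bigr)$ directly. This is the same argument as yours packaged via group-invariance rather than an explicit substitution; what your version buys is that it exhibits $m_k$ concretely as (a shift of) a cyclotomic number, which makes the link to the cyclotomic CEDF literature cited in the paper explicit.
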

An example of this is given in Example \ref{ex:Z13}.

We provide a cyclotomic construction of $C_m$-defined EDFs which are not CEDFs. Recall that $\{C_0^e, \ldots, C_{e-1}^e\}$ are  the cyclotomic classes of order $e$ in $\mathrm{GF}(q)$ where $q=ef+1$.

\begin{theorem}
Let $q=2ab+1$, where $a,b>1$ are both odd. \\
Let $\mathcal{A}=(C_0^{2a},C_2^{2a},C_4^{2a},\cdots, C_{2(a-1)}^{2a})$. Then $\mathcal{A}$ is a $(q,a,b,b; C_{a})$-EDF which is not a CEDF.
\end{theorem}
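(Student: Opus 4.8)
The plan is to show directly that the multiset $S := \bigcup_{(i,j)\in\E(C_a)}\Delta(A_j,A_i)$ equals $b(\mathrm{GF}(q)\setminus\{0\})$, and separately that the ``directed'' sum $T := \bigcup_{i=0}^{a-1}\Delta(A_{i+1},A_i)$ (whose constancy is exactly the $1$-CEDF condition) fails to be constant. Writing $A_i = C_{2i}^{2a}$, so that $|A_i|=b$ and there are $a$ sets, a count of the $2ab^2$ total differences against the $2ab=q-1$ nonzero group elements already forces $\lambda=b$.

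First I would locate $-1$. Since $b$ is odd we have $q\not\equiv 1\pmod{4a}$, so Lemma~\ref{lem:SophieLaura} does \emph{not} place $-1$ in $C_0^{2a}$; instead $-1=\alpha^{(q-1)/2}=\alpha^{ab}$ together with $ab\equiv a\pmod{2a}$ gives $-1\in C_a^{2a}$. Hence negation acts on classes by $-C_j^{2a}=C_{j+a}^{2a}$, and because $a$ is odd it interchanges the even-indexed classes (our $A_i$) with the odd-indexed ones. Since $\E(C_a)$ contains both $(i,i+1)$ and $(i+1,i)$ for each consecutive pair, and $\Delta(A_i,A_{i+1})=-\Delta(A_{i+1},A_i)$, I obtain the clean decomposition $S=T\cup(-T)$.

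The core computation is the multiplicity of a fixed $g\in C_k^{2a}$ in $T$. Let $(i,j)_e$ denote the cyclotomic number $\#\{w\in C_i^e: w+1\in C_j^e\}$. Dividing $x-y=g$ through by $g$ shows that the multiplicity of $g\in C_k^{e}$ in $\Delta(C_p^e,C_r^e)$ is $(r-k,p-k)_e$, so the multiplicity of $g$ in $T$ is $t_{(k)}=\sum_{i=0}^{a-1}(2i-k,\,2i+2-k)_{2a}$. The key simplification is the classical relation $(i,j)_e=(-i,\,j-i)_e$, which collapses each summand to $(k-2i,2)_{2a}$; as $i$ runs over $0,\dots,a-1$ the index $k-2i$ runs over all residues of the same parity as $k$, so $t_{(k)}$ depends only on $k\bmod 2$. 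Calling these two values $P$ (for even $k$) and $Q$ (for odd $k$), the elementary identity $\sum_{c=0}^{2a-1}(c,2)_{2a}=\#\{v\neq 0: v+1\in C_2^{2a}\}=b$ (valid because $1\notin C_2^{2a}$) yields $P+Q=b$.

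Two short consequences then finish both parts. For the EDF claim, the multiplicity of $g\in C_k^{2a}$ in $S=T\cup(-T)$ is $t_{(k)}+t_{(k+a)}$; since $a$ is odd, $k$ and $k+a$ have opposite parity, so this equals $P+Q=b$ for every $k$, giving $S=b(\mathrm{GF}(q)\setminus\{0\})$. For the ``not a CEDF'' claim, $T$ has multiplicity $P$ on the even classes and $Q$ on the odd classes, and since $P+Q=b$ is \emph{odd} we must have $P\neq Q$; hence $T\neq\lambda'(\mathrm{GF}(q)\setminus\{0\})$ for any $\lambda'$, so $\mathcal{A}$ is not a CEDF. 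The only genuine obstacle is assembling the cyclotomic-number bookkeeping cleanly---pinning down $-1\in C_a^{2a}$ and applying $(i,j)_e=(-i,j-i)_e$---after which the parity of $a$ (to make $S$ constant) and the parity of $b$ (to break the constancy of $T$) do all the remaining work.
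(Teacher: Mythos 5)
Your proof is correct, and it takes a genuinely different route from the paper's. The paper argues structurally: since $\Delta(C_2^{2a},C_0^{2a})$ is invariant under multiplication by elements of $C_0^{2a}$, it is a multiset union of $b$ cyclotomic classes $C_{r_1}^{2a},\dots,C_{r_b}^{2a}$; multiplying by $\alpha^{2i}$ gives the other consecutive difference multisets, so your directed multiset $T$ equals $\bigcup_{k=1}^{b}\bigl(C_{r_k}^{2a}\cup\alpha^{2}C_{r_k}^{2a}\cup\cdots\cup\alpha^{2(a-1)}C_{r_k}^{2a}\bigr)$, and each such orbit is all of $C_0^2$ or all of $C_1^2$ according to the parity of $r_k$. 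Thus $T$ is a union of $b$ full copies of the squares or nonsquares; since $q\equiv 3 \bmod 4$ makes $-1$ a nonsquare (the paper's form of your sharper observation $-1\in C_a^{2a}$), the two orientations together give $b$ copies of $\mathrm{GF}(q)^{*}$, and oddness of $b$ blocks constancy of $T$. You reach the same dichotomy---$T$ has multiplicity $P$ on squares and $Q=b-P$ on nonsquares---by direct multiplicity bookkeeping with cyclotomic numbers instead: the classical identity $(i,j)_e=(-i,\,j-i)_e$ collapses $t_{(k)}=\sum_{i}(2i-k,\,2i+2-k)_{2a}$ to a sum of $(c,2)_{2a}$ over all $c$ of fixed parity, and $\sum_{c=0}^{2a-1}(c,2)_{2a}=b$ (your justification that $1\notin C_2^{2a}$ correctly uses $a>1$). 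All your steps check out: placing $-1$ in $C_a^{2a}$ needs only $b$ odd, oddness of $a$ makes $k$ and $k+a$ of opposite parity so that $S=T\cup(-T)$ has constant multiplicity $P+Q=b$, and oddness of $b$ forces $P\neq Q$, killing the CEDF property. The paper's orbit argument is shorter and exposes the actual class structure of $T$; your computation is more elementary and self-contained, and it buys an explicit formula $P=\sum_{c\ \mathrm{even}}(c,2)_{2a}$ from which the exact square/nonsquare imbalance of $T$ can be computed for a specific $q$. One small point to make explicit: non-CEDF-ness should be checked for both orientations of the cycle, as the paper does; in your framework the reverse orientation merely swaps $P$ and $Q$, so the same inequality $P\neq Q$ disposes of it.
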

\begin{proof}
We note that $q \equiv 3 \mod 4$; it is well-known that in this case $-1$ is a nonsquare in $\mathrm{GF}(q)$. $\mathcal{A}$ consists of the ``even" cyclotomic classes of order $2a$ (each of cardinality $b$), i.e. those multiplicative cosets of $C_0^{2a}$ which partition the squares $C_0^2$ of $\mathrm{GF}(q)$. The ``odd" classes $\{C_1^{2a}, C_3^{2a}, \ldots, C_{2a-1}^{2a}\}$ partition the nonsquares $C_1^2$;  since $-1 \in C_1^2$, this is precisely the set $\{-A: A \in \mathcal{A}\}$. Consider the difference multisets:  $\Delta(C_2^{2a},C_0^{2a})$ is the multiset union $\cup_{k=1}^b C_{r_k}^{2a}$ of $b$ (not necessarily distinct) cyclotomic classes $C_{r_1}^{2a}, \ldots, C_{r_b}^{2a}$, and for $1 \leq i \leq a-1$, the multiset $\Delta(C_{2i+2}^{2a},C_{2i}^{2a})=\alpha^{2i}\Delta(C_2^{2a},C_0^{2a})$ comprises the cyclotomic classes $\alpha^{2i}C_{r_1}^{2a}, \ldots, \alpha^{2i}C_{r_b}^{2a}$.  By this process, the difference multiset for $\mathcal{A}$ is 
$$ \cup_{k=1}^b (C_{r_k}^{2a} \cup \alpha^2 C_{r_k}^{2a} \cup \cdots \alpha^{2(a-1)} C_{r_k}^{2a})$$
where $(C_{r_k}^{2a} \cup \alpha^2 C_{r_k}^{2a} \cup \cdots \alpha^{2(a-1)} C_{r_k}^{2a})$ is $C_0^2$ if $r_k$ is even and $C_1^2$ if $r_k$ is odd. Hence the difference multiset $\cup_{i=0}^{a-1} \Delta(C_{2i+2}^{2a}, C_{2i}^{2a})$ corresponding to the ``clockwise" oriented cycle is the multiset union of $b$ sets, each from $\{C_0^2,C_1^2\}$. For the opposite orientation, the difference multiset is precisely the negative of this. Since $-C_0^2=C_1^2$, the multiset union of the differences from both orientations yields $b$ copies of $GF(q)^*$, hence $\mathcal{A}$ is an EDF defined by a cycle of length $a$, with the given parameters.
$\mathcal{A}$ cannot be a CEDF, since the difference multiset for each oriented cycle is the multiset union of an odd number of sets, each from $\{C_0^2,C_1^2\}$, which cannot give an equal number of squares and non-squares.
\end{proof}

\begin{example}
In $\mathrm{GF}(19)$, take $a=b=3$: here $2$ is a primitive element and $\mathcal{A}=\{C_0^6,C_2^6,C_4^6\}$ where $C_0^6=\{1,7,11\}$, $C_2^6=\{4,9,6\}$ and $C_4^6=\{16,17,5\}$.  Here $\Delta(C_2^6,C_0^6) \cup \Delta(C_4^6,C_2^6) \cup \Delta(C_0^6,C_4^6)$ is a multiset in which the nonzero squares of $\mathbb{Z}_{19}$ appear once and the nonsquares appear $2$ times, since  $\Delta(C_2^6,C_0^6)=C_1^6 \cup C_3^6 \cup C_4^6$. For $-(\Delta(C_2^6,C_0^6) \cup \Delta(C_4^6,C_2^6) \cup \Delta(C_0^6,C_4^6))$, nonzero squares occur twice and nonsquares occur once.  The union of both of these multisets yields $3$ copies of each nonzero elements of $\mathrm{GF}(19)$, so $\mathcal{A}$ is a $(19,3,3,3;C_3)$-EDF which is not a CEDF.
\end{example}

We also present the following examples (which are not instances of the above construction) found via computational search in GAP \cite{GAP}, using a constraint-satisfaction modelling language \cite{Akg} and solver \cite{GenJefMig}.

\begin{example}
\begin{itemize}
\item[(i)] In $\mathbb{Z}_{13}$, let $A_0=\{0,6\}, A_1=\{1,2\}, A_2=\{9,12\}$.  Then $(A_0,A_1,A_2)$ is a $(13,3,2,2, C_3)$-EDF but not a $(13,3,2,1)$-CEDF since $\Delta(A_1,A_0) \cup \Delta(A_2,A_1) \cup \Delta(A_0,A_2)$ is a multiset in which the non-zero elements of $\mathbb{Z}_{13}$ appear $0,1$ or $2$ times.  Moreover it can be checked that these sets do not correspond to an $(13,3,2,1;C_3^*)$-EDF for any orientation of $C_3$. 
\item[(ii)] In $\mathbb{Z}_{11}$, let $A_0=\{0,7\}, A_1=\{1,2\}, A_2=\{4,9\}, A_3=\{5,8\}$ and $A_4=\{3,10\}$.  Then $(A_0,A_1,A_2,A_3,A_4)$ is an $(11,5,2,4; C_5)$-EDF, but not a $(11,5,2,2)$-CEDF since $\Delta(A_1,A_0) \cup \Delta(A_2,A_1) \cup \Delta(A_3,A_2) \cup \Delta(A_4,A_3) \cup \Delta(A_0,A_4)$ is a multiset in which the nonzero elements of $\mathbb{Z}_{11}$ appear $1,2$ or $3$ times.  
\end{itemize}
\end{example}

\end{subsection}

\end{section}

\begin{section}{$H$-defined EDFs when $H$ is complete bipartite}

In this section, we consider EDFs defined by oriented and undirected complete bipartite graphs, and obtain a complete description.

Recall that our notation for a complete bipartite digraph $K_{a,b}$ has bipartition $A \cup B$ (where $|A|=a, |B|=b$), and for the oriented version, the standard set of directed edges is $\E(K_{a,b}^*):=\{ (i,j): i \in A, j \in B \}$.  We use a semi-colon to separate the sets corresponding to the vertices of $A$ from those corresponding to the vertices of $B$.

The definition of an $m$-set SEDF requires that, for each set in turn, the sets form a $K_{m-1,1}^*$-defined EDF (with the standard orientation) having that set at the centre of the star.  However, in practice only one SEDF with more than two sets is known \cite{JedLi, Wenetal}.

\begin{example}
Let $q=3^5=243$, $e=11$ and $f=22$.  Let $\mathcal{A}=\{C_i^{11}: 0 \leq i \leq 10 \}$ be the cyclotomic classes of order $11$ in $GF(243)$. In \cite{Wenetal} it is shown that $\mathcal{A}$ is a $(243,11,22,20)$-SEDF, and hence $(C^{11}_0,\ldots C^{11}_{i-1}, C^{11}_{i+1},\ldots, C^{11}_{10}; C^{11}_i)$ is a $(243,11,22,2; K_{10,1}^*)$-CEDF for each $0 \leq i \leq 10$.
\end{example}

For what follows, we require the following definitions from \cite{PatSti1}.

\begin{definition}\label{def:GSEDF}
Let $G$ be a group of order $n$ and let $m>1$.  A family of disjoint sets $\{A_1, \ldots, A_m\}$ in $G$, with $|A_i|=k_i$ for $1 \leq k \leq m$, is an $(n,m; k_1,\ldots,k_m; \lambda_1, \ldots, \lambda_m)$-GSEDF (generalised strong external difference family) if, for each $i$ with $1 \leq i \leq m$, the multiset equation
$ \bigcup_{\{j: j \neq i\}} \Delta(A_i,A_j)= \lambda_i(G \setminus \{0\})$ holds.
\end{definition} 

\begin{definition}\label{def:GEDF}
Let $G$ be a group of order $n$ and let $m>1$.  A family of disjoint sets $\{A_1, \ldots, A_m\}$ in $G$, with $|A_i|=k_i$ for $1 \leq k \leq m$, is an $(n,m; k_1,\ldots,k_m; \lambda)$-GEDF (generalised external difference family) if the multiset equation
$ \bigcup_{\{i,j: i \neq j\}} \Delta(A_i,A_j)= \lambda(G \setminus \{0\})$ holds.
\end{definition} 

Observe that SEDFs are examples of GSEDFs and EDFs are examples of GEDFs, in which all set-sizes are equal.

It turns out that EDFs defined by directed complete bipartite graphs may be completely described in terms of GSEDFs, and that EDFs defined by undirected complete bipartite graphs may be completely described in terms of GEDFs.

\begin{theorem}\label{thm:justGSEDF} Let $a,b,l \in \mathbb{N}$ and let $G$ be a group of order $n$.  The following statements are equivalent:
\begin{itemize}
\item[(i)] there exists an $(n,a+b,l,\lambda; K_{a,b}^*)$-EDF;
\item[(ii)] there exists an $(n, la+lb,1,\lambda; K_{la,lb}^*)$-EDF;
\item[(iii)] there exists an $(n,2;la,lb;\lambda,\lambda)$-GSEDF.
\end{itemize}
\end{theorem}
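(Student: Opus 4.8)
The plan is to establish the equivalence of the three statements by proving the cyclic chain of implications $(i)\Rightarrow(ii)\Rightarrow(iii)\Rightarrow(i)$, exploiting the fact that the difference multiset associated to a $K_{a,b}^*$-defined EDF is, by the definition of $\E(K_{a,b}^*)$, exactly $\bigcup_{i \in A, j \in B}\Delta(A_j, A_i)$, where $A$ and $B$ are the two parts of the bipartition. The central observation driving the whole argument is that in a complete bipartite digraph every vertex in $A$ connects to every vertex in $B$ (and only to those), so the associated difference multiset depends only on the \emph{union} $U = \bigcup_{i \in A} A_i$ of the sets on the $A$-side and the union $V = \bigcup_{j \in B} A_j$ on the $B$-side, namely it equals $\Delta(V, U)$. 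This is because $\Delta(V,U) = \bigcup_{i \in A, j \in B}\Delta(A_j, A_i)$ precisely when all the $A_i$ (and all the $A_j$) are pairwise disjoint, which is guaranteed since $\mathcal{A}$ is a family of disjoint sets.

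\textbf{Key steps.} First I would prove $(i)\Rightarrow(ii)$. Given an $(n,a+b,l,\lambda; K_{a,b}^*)$-EDF with sets $A_0,\ldots,A_{a-1}$ on the $A$-side and $A_a,\ldots,A_{a+b-1}$ on the $B$-side, I replace each $l$-set $A_i$ by its $l$ constituent singletons. This yields $la$ singleton sets on the $A$-side and $lb$ singleton sets on the $B$-side, which are pairwise disjoint because the original sets were disjoint $l$-sets. The crucial point is that the difference multiset is unchanged: both $\bigcup_{i \in A, j \in B}\Delta(A_j, A_i)$ and the corresponding multiset for the singletons equal $\Delta(V, U)$, since splitting a set into singletons does not alter the multiset of differences $\Delta(V,U)$ computed between the two unions. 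Hence the singletons form an $(n, la+lb, 1, \lambda; K_{la,lb}^*)$-EDF. Next, for $(ii)\Rightarrow(iii)$, I take the $la+lb$ singletons and aggregate them: let $A_1^{GSEDF} = U$ be the union of all $la$ singletons on the $A$-side and $A_2^{GSEDF} = V$ the union of all $lb$ singletons on the $B$-side. These are disjoint sets of sizes $la$ and $lb$. The GSEDF condition requires, for $i=1$, that $\Delta(A_1^{GSEDF}, A_2^{GSEDF}) = \lambda(G\setminus\{0\})$ and, for $i=2$, that $\Delta(A_2^{GSEDF}, A_1^{GSEDF}) = \lambda(G\setminus\{0\})$. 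The second of these is exactly $\Delta(V, U) = \lambda(G\setminus\{0\})$, which is the EDF condition from (ii). The first is $\Delta(U,V) = -\Delta(V,U)$, and since $\lambda(G\setminus\{0\})$ is invariant under negation (as $x\mapsto -x$ permutes the non-identity elements of $G$), this also holds. Thus we obtain an $(n,2;la,lb;\lambda,\lambda)$-GSEDF. Finally, $(iii)\Rightarrow(i)$ reverses this: given the two GSEDF sets of sizes $la$ and $lb$, I must partition the first into $a$ disjoint $l$-sets and the second into $b$ disjoint $l$-sets (arbitrarily) to recover the $K_{a,b}^*$ structure, again using that the aggregated difference multiset $\Delta(V,U)$ equals the $K_{a,b}^*$-defined difference multiset.

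\textbf{Main obstacle.} The step I expect to require the most care is $(iii)\Rightarrow(i)$, specifically the re-partitioning: one must verify that an arbitrary partition of the size-$la$ set into $a$ blocks of size $l$ and the size-$lb$ set into $b$ blocks of size $l$ genuinely produces a valid $K_{a,b}^*$-defined EDF, i.e. that the defining multiset equation is insensitive to \emph{how} the blocks are chosen. This is where the observation that the $K_{a,b}^*$ difference multiset depends only on the two unions $U$ and $V$, and not on their internal partition, does the real work; once that is articulated cleanly, the implication is immediate. A secondary subtlety worth flagging is the use of negation-invariance of $\lambda(G\setminus\{0\})$ in the $(ii)\Rightarrow(iii)$ direction, which relies on $G$ being a group so that $g \mapsto -g$ (inversion, written additively) fixes the identity and permutes the remaining elements; this is what forces the two GSEDF parameters to be equal rather than potentially distinct.
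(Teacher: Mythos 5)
Your proposal is correct and follows essentially the same route as the paper: both arguments rest on the observation that the $K_{a,b}^*$-defined difference multiset depends only on the two unions $U$ and $V$, proving (i)$\Leftrightarrow$(ii) by splitting into/merging from singletons and (ii)$\Leftrightarrow$(iii) by aggregating the two sides of the bipartition (your cyclic chain versus the paper's two biconditionals is only a bookkeeping difference). If anything, you are slightly more careful than the paper, which asserts $\Delta(X_2,X_1)=\Delta(X_1,X_2)$ as ``clear'' where you correctly identify that this rests on the negation-invariance of $\lambda(G\setminus\{0\})$.
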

\begin{proof}
To see that (i) holds if and only if (ii) holds, observe that if
$$(\{v_1\}, \{v_1\}, \ldots, \{v_{la}\}; \{v_{la+1}\}, \ldots, \{v_{la+lb}\})$$
is an $(n, la+lb,1,\lambda; K_{la,lb}^*)$-EDF in $G$, then partitioning $\{v_1,\ldots, v_{la}\}$ arbitrarily into $a$ size-$l$ sets and partitioning $\{v_{la+1},\ldots, v_{la+lb}\}$ arbitrarily into $b$ size-$l$ sets yields an $(n,a+b,l,\lambda; K_{a,b}^*)$-EDF; the difference multiset in both cases is 
$\Delta(\{v_{la+1},\ldots,v_{la+lb}\},\{v_1,\ldots,v_{la}\})$. Conversely, for an $(n,a+b,l,\lambda; K_{a,b}^*)$ comprising the $l$-sets $(A_1,\ldots,A_{a}; A_{a+1},\ldots,A_{a+b})$, we may take the elements of $A_1 \cup \cdots \cup A_{a}$ as $la$ singleton sets and those of $A_{a+1} \cup \cdots \cup A_{a+b}$ as $lb$ singleton sets to obtain a  $(n, la+lb,1,\lambda; K_{la,lb}^*)$-EDF.
To see that (ii) holds precisely if (iii) holds, consider the $(n, la+lb,1,\lambda; K_{la,lb}^*)$-EDF above; take the singleton sets on each size of its bipartition to obtain two disjoint sets $X_1=\{v_1,\ldots,v_{la}\}$ and $X_2=\{v_{la+1},\ldots, v_{lb}\}$ of size $la$ and $lb$ respectively.  It is clear that $\Delta(X_2,X_1)=\Delta(X_1,X_2)=\lambda(G \setminus \{0\})$. This is precisely the requirement for an $(n,2;la,lb;\lambda,\lambda)$-GSEDF.  The converse easily follows.
\end{proof}

Many constructions for two-set GSEDFs (and in particular, for two-set SEDFs) are known (see for example \cite{HucPatWEDF,JedLi, Wenetal}). Construction 3.10 of \cite{HucPatWEDF} demonstrates that the two sets $\{0,1,\ldots,k_1-1\}$ and $\{k_1, 2k_1,\ldots,k_1 k_2\}$ form a $(k_1 k_2+1,2;k_1,k_2;1,1)$-GSEDF  in $\mathbb{Z}_{k_1 k_2+1}$.

\begin{corollary}\label{cor:Kab*}
There exists a $(l^2 ab+1, a+b,l,1; K_{a,b}^*)$-EDF in $\mathbb{Z}_{l^2 ab+1}$ for any $a,b,l \in \mathbb{N}$.
\end{corollary}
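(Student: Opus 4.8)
The plan is to read this statement off as an immediate consequence of the equivalence established in Theorem~\ref{thm:justGSEDF} together with the explicit two-set GSEDF construction quoted immediately before the corollary. Specializing Theorem~\ref{thm:justGSEDF} to the case $\lambda = 1$, the existence of an $(l^2ab+1, a+b, l, 1; K_{a,b}^*)$-EDF is \emph{equivalent} to the existence of an $(l^2ab+1, 2; la, lb; 1, 1)$-GSEDF in $\mathbb{Z}_{l^2ab+1}$. Thus the task reduces entirely to exhibiting the latter object, after which the desired EDF is produced by the conversion built into the proof of that theorem.

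First I would set $k_1 = la$ and $k_2 = lb$, so that $k_1 k_2 = l^2 ab$ and hence $k_1 k_2 + 1 = l^2 ab + 1$, which is precisely the required group order. Construction~3.10 of \cite{HucPatWEDF}, as quoted, then supplies the two sets $\{0, 1, \ldots, la - 1\}$ and $\{la, 2la, \ldots, la \cdot lb\}$, of sizes $la$ and $lb$ respectively, and guarantees that they form an $(l^2ab+1, 2; la, lb; 1, 1)$-GSEDF in $\mathbb{Z}_{l^2ab+1}$. This is exactly statement~(iii) of Theorem~\ref{thm:justGSEDF} for this choice of parameters.

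Finally I would invoke the implication (iii)~$\Rightarrow$~(i) of Theorem~\ref{thm:justGSEDF} to convert this GSEDF into the desired $K_{a,b}^*$-defined EDF: one partitions the size-$la$ block arbitrarily into $a$ size-$l$ sets and the size-$lb$ block arbitrarily into $b$ size-$l$ sets, placing the former $a$ sets on one side of the bipartition and the latter $b$ on the other, exactly as in the proof of that theorem. Since $a, b, l$ were arbitrary positive integers, this establishes existence for every admissible parameter triple.

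There is no substantive obstacle here: the entire content is parameter bookkeeping, namely verifying that the block sizes $la$ and $lb$ of the GSEDF multiply to give group order $l^2ab+1$ and that the index $\lambda = 1$ is carried faithfully through both the cited construction and the equivalence of Theorem~\ref{thm:justGSEDF}. The only point warranting a moment's care is confirming that the quoted GSEDF construction really applies for the specific product $k_1 k_2 = l^2 ab$ we need, but this is immediate since that construction works for \emph{all} $k_1, k_2$.
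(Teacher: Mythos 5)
Your proposal is correct and matches the paper's proof: the paper likewise instantiates Construction~3.10 of \cite{HucPatWEDF} with block sizes $la$ and $lb$ to get the $(l^2ab+1,2;la,lb;1,1)$-GSEDF on $\{0,\ldots,la-1\}$ and $\{la,2la,\ldots,(lb)(la)\}$, and then applies the partition-into-size-$l$-sets conversion from the proof of Theorem~\ref{thm:justGSEDF} to obtain the $K_{a,b}^*$-defined EDF. Your only cosmetic difference is that you phrase the conversion as the implication (iii)~$\Rightarrow$~(i) of the theorem statement, whereas the paper cites the process inside the theorem's proof directly; the content is identical.
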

\begin{proof}
Combining Construction 3.10 of \cite{HucPatWEDF} with the process in the proof of Theorem \ref{thm:justGSEDF}, take any partition of $\{0,1,\ldots,la-1\}$ into $a$ size-$l$ sets $(A_1,\ldots,A_a)$ on one side of the bipartition, and any partition of $\{la,2la,\ldots, (lb)(la)\}$ into $b$ size-$l$ sets $(A_{a+1},\ldots,A_{a+b})$ on the other.  Then $(A_1,\ldots,A_a; A_{a+1},\ldots, A_{a+b})$ is a $(l^2 ab+1,a+b,l,1; K_{a,b}^*)$-defined EDF in $\mathbb{Z}_{l^2 ab+1}$.
\end{proof}

\begin{example}
Let $l=2$ and $a=b=3$.
\begin{itemize}
\item[(i)] In $\mathbb{Z}_{37}$, apply Corollary \ref{cor:Kab*} to obtain the $(37,6,2,1; K_{3,3}^*)$-EDF given by 
$$(\{0,1\}, \{2,3\}, \{4,5\}; \{6,12\}, \{18,24\},\{30,36\}).$$
\item[(ii)] In $\mathbb{Z}_{13}$, apply the well-known  construction for a $(q,2,(q-1)/2,(q-1)/4)$-SEDF in $\mathrm{GF}(q)$ ($q \equiv 1 \mod 4$) whose sets are the nonzero squares and the nonsquares of the field (\cite{HucPatSEDF}) to obtain the $(13,2,6,3)$-SEDF with sets $\{ 1,3,4,9,10,12\}$ and $ \{2,5,6,7,8,11\}$ in $\mathbb{Z}_{13}$.  Using the relationships of Theorem \ref{thm:justGSEDF} to appropriately partition these sets shows that $(\{1,3\}, \{4,9\}, \{10,12\}; \{2,5\}, \{6,7\}, \{8,11\})$ is a $(13,6,2,3; K_{3,3}^*)$-EDF.
\end{itemize}
\end{example}

We may obtain an analogous result to Theorem \ref{thm:justGSEDF} for the undirected case; we omit the (similar) proof.

\begin{theorem}\label{thm:justGEDF}
The following statements are equivalent:
\begin{itemize}
\item[(i)] there exists an $(n,a+b,l,\lambda; K_{a,b})$-EDF;
\item[(ii)] there exists an $(n, la+lb,1,\lambda; K_{la,lb})$-EDF;
\item[(iii)] there exists an $(n,2;la,lb;\lambda)$-GEDF.
\end{itemize}
\end{theorem}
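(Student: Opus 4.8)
The plan is to mirror the proof of Theorem \ref{thm:justGSEDF}, adapting it from the oriented bipartite setting to the undirected one. The only structural change is that, for the undirected graph $K_{a,b}$ viewed as a digraph, the directed edge-set $\E(K_{a,b})$ contains \emph{both} $(i,j)$ and $(j,i)$ for every pair with $i \in A$ and $j \in B$. Consequently the governing difference multiset will be a symmetric union $\Delta(X_2,X_1) \cup \Delta(X_1,X_2)$ rather than a single directed union, and this is exactly what pairs with the GEDF condition (a single parameter $\lambda$) instead of the GSEDF condition. Setting up this correspondence of multisets correctly is the one point that distinguishes the argument from the oriented case.

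For the equivalence of (i) and (ii), I would argue as in Theorem \ref{thm:justGSEDF}. Starting from a singleton-set $(n,la+lb,1,\lambda;K_{la,lb})$-EDF, I would partition the $la$ singletons on one side of the bipartition arbitrarily into $a$ size-$l$ sets $A_1,\ldots,A_a$ and the $lb$ singletons on the other into $b$ size-$l$ sets $A_{a+1},\ldots,A_{a+b}$. The key observation is that the directed edges of $K_{a,b}$ and of $K_{la,lb}$ both join every vertex on one side to every vertex on the other, in both directions, so in each case the difference multiset is precisely $\Delta(X_2,X_1)\cup\Delta(X_1,X_2)$, where $X_1$ and $X_2$ denote the two vertex classes; refining or coarsening the partition does not alter this multiset. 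The converse is the same reduction, taking the elements of each side as singleton sets.

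For the equivalence of (ii) and (iii), I would take the two vertex classes $X_1=\{v_1,\ldots,v_{la}\}$ and $X_2=\{v_{la+1},\ldots,v_{la+lb}\}$ as two sets of sizes $la$ and $lb$. The defining condition of the $K_{la,lb}$-defined EDF reads $\Delta(X_2,X_1)\cup\Delta(X_1,X_2)=\lambda(G\setminus\{0\})$, which is exactly the defining multiset equation of a two-set $(n,2;la,lb;\lambda)$-GEDF in the sense of Definition \ref{def:GEDF}, since for only two sets the GEDF union over all ordered pairs $(i,j)$ with $i\neq j$ is just $\Delta(X_1,X_2)\cup\Delta(X_2,X_1)$. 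The converse is immediate.

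I do not expect any serious obstacle: the argument is a routine translation of the oriented proof, which is presumably why the authors omit it. The single point requiring genuine care is the bookkeeping of edge orientations, namely verifying that the undirected edge-set contributes both $\Delta(X_1,X_2)$ and $\Delta(X_2,X_1)$, so that the matching object on the combinatorial-design side is the single-parameter GEDF rather than the two-parameter GSEDF. Once this multiset identity is established, all three equivalences follow by the same partition-and-singleton manipulations used in Theorem \ref{thm:justGSEDF}.
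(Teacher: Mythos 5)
Your proposal is correct and follows exactly the route the paper intends: the authors omit the proof of Theorem \ref{thm:justGEDF} precisely because it is the ``similar'' adaptation of Theorem \ref{thm:justGSEDF} that you carry out, and you correctly identify the one substantive point, namely that $\E(K_{a,b})$ contains both orientations of each edge, so the governing multiset is the symmetric union $\Delta(X_1,X_2)\cup\Delta(X_2,X_1)$, matching the single-parameter GEDF condition rather than the GSEDF one. Your partition-and-singleton arguments for (i)$\Leftrightarrow$(ii) and the direct identification for (ii)$\Leftrightarrow$(iii) are sound and complete.
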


We can use Corollary \ref{cor:Kab*} obtain an undirected construction in a larger cyclic group using the same sets, which is not a $K_{a,b}^*$-defined EDF.

\begin{corollary}
There exists a $(2l^2 ab+1, a+b,l,1; K_{a,b})$-EDF in $\mathbb{Z}_{l^2 ab+1}$ for any $a,b,l \in \mathbb{N}$.
\end{corollary}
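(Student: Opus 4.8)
The plan is to reuse the explicit sets produced by Corollary \ref{cor:Kab*}, but to regard them as lying in the strictly larger cyclic group $\mathbb{Z}_{2l^2ab+1}$ (the first parameter $n=2l^2ab+1$ is the order of this larger group, in which we work). Write $N=l^2ab$. By Corollary \ref{cor:Kab*} the $A$-side sets partition $X_1=\{0,1,\ldots,la-1\}$ and the $B$-side sets partition $X_2=\{la,2la,\ldots,(lb)(la)\}$; these elements are integers in $\{0,1,\ldots,N\}$ and hence remain pairwise distinct residues modulo $2N+1$, so the collection is still a family of disjoint $l$-sets in $\mathbb{Z}_{2N+1}$. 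The heart of the argument is to track the cross-differences between the two sides as honest integers before reducing modulo $2N+1$.

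First I would compute the integer differences $\Delta(X_2,X_1)$. For fixed $1\le k\le lb$, the values $la\cdot k-j$ with $0\le j\le la-1$ sweep out the interval $[la(k-1)+1,\,la\,k]$, and as $k$ runs from $1$ to $lb$ these intervals tile $\{1,2,\ldots,l^2ab\}=\{1,\ldots,N\}$ exactly once. Thus, viewed as integers, $\Delta(X_2,X_1)=\{1,\ldots,N\}$ with no repetition and no wraparound, so these reduce unchanged to the residues $\{1,\ldots,N\}$ in $\mathbb{Z}_{2N+1}$. Consequently $\Delta(X_1,X_2)=-\Delta(X_2,X_1)$ reduces to $\{-1,-2,\ldots,-N\}\equiv\{N+1,N+2,\ldots,2N\}\pmod{2N+1}$.

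The undirected $K_{a,b}$-difference multiset collects both orientations of every cross-edge, so it equals $\Delta(X_2,X_1)\cup\Delta(X_1,X_2)$. By the previous step this is $\{1,\ldots,N\}\cup\{N+1,\ldots,2N\}=\mathbb{Z}_{2N+1}\setminus\{0\}$, each nonzero element occurring exactly once; hence the family is a $(2l^2ab+1,a+b,l,1;K_{a,b})$-EDF, as claimed. Equivalently, one may phrase this as producing a $(2N+1,2;la,lb;1)$-GEDF on $X_1,X_2$ and invoking Theorem \ref{thm:justGEDF}.

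The construction is easy once the key idea is in place, and the only real point to watch is the modulus-doubling. In the original group $\mathbb{Z}_{N+1}$ the residue $N$ coincides with $-1$, so $\Delta(X_2,X_1)$ and its negative overlap completely (giving $\lambda=2$, which is exactly the content of Theorem \ref{general:dir_to_undir}); passing to $\mathbb{Z}_{2N+1}$ is precisely what separates the ``positive'' half $\{1,\ldots,N\}$ from the ``negative'' half $\{N+1,\ldots,2N\}$ so that they interleave to cover each nonzero element once. Finally I would remark that in $\mathbb{Z}_{2N+1}$ the single-orientation differences cover only $\{1,\ldots,N\}$, so this family is genuinely an undirected $K_{a,b}$-defined EDF and is not a $K_{a,b}^*$-defined EDF with these sets and orientation.
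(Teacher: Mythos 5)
Your proposal is correct and follows essentially the same route as the paper: reuse the sets of Corollary \ref{cor:Kab*} in the doubled cyclic group $\mathbb{Z}_{2l^2ab+1}$, observe that the one-direction differences are genuine integer differences $[1,l^2ab]$ with no wraparound, and note that the reverse orientation contributes $[l^2ab+1,2l^2ab]$, so the two halves tile the nonzero residues. Your only departures are cosmetic: you verify the tiling of $[1,l^2ab]$ by an explicit interval computation rather than citing the $\lambda=1$ property of the directed EDF, and you correctly read the group in the statement as $\mathbb{Z}_{2l^2ab+1}$ (the paper's statement contains a typo, as its own proof confirms).
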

\begin{proof}
Consider the $(l^2 ab+1,a+b,l,1; K_{a,b}^*)$-EDF in $\mathbb{Z}_{l^2 ab+1}$ from Corollary \ref{cor:Kab*}, given by $(A_1,\ldots,A_a; A_{a+1},\ldots, A_{a+b})$.  We claim that this also forms the desired construction in $\mathbb{Z}_{2l^2 ab+1}$, with $1< \cdots < l^2 ab$ now viewed as elements of $\mathbb{Z}_{2l^2 ab+1}$.  The difference multiset of the original $(l^2 ab+1,a+b,l,1; K_{a,b}^*)$-EDF is $[1,l^2 ab]$; since all elements of $A_{a+1},\ldots, A_{a+b}$ are larger than all elements of $A_1,\ldots,A_a$ (using the natural ordering), the differences were obtained via standard integer subtraction, i.e. without invoking modulo $l^2 ab$.  Hence these differences $[1,l^2 ab]$ may be considered as elements of $\mathbb{Z}_{2l^2 ab+1}$, while in the reverse direction the differences are $-[1,l^2ab]=[l^2 ab+1,2l^2 ab]$.
\end{proof}

\end{section}

\begin{section}{Further work}
It is clear that the definition of digraph-defined EDF introduced in this paper naturally leads to a large number of new research questions.  

To date, only those digraph-defined EDFs defined in terms of undirected complete graphs (the standard EDFs), undirected complete graphs with oriented stars (SEDFs) and oriented cycles (CEDFs) have appeared in the literature.  It would be of great interest to see constructions and non-existence results for other EDFs defined by natural families of graphs and digraphs.

It is also of interest to understand what role the nature of the group $G$ plays in the possible range of digraph-defined EDFs obtainable in $G$.

The original EDF definition required all subsets in the family to be pairwise disjoint, partly to ensure unique decoding in the original AMD application \cite{PatSti1}, and partly because for an EDF defined by a complete graph, overlap would yield the identity as a difference.  However, from a purely combinatorial viewpoint, there is no reason to require disjoint sets; indeed, classical difference families do not require this.  We propose the following definition:

\begin{definition}
Let $G$ be a group of order $n$ and let $m>1$.  Let $H$ be a labelled digraph on $m$ vertices $\{0,1,\ldots,m-1\}$.  A family of $l$-sets $\{A_0, \ldots, A_{m-1}\}$ in $G$ is an adjacent-disjoint $(n,m,l,\lambda; H)$-EDF if the multiset equation
$$ \bigcup_{(i,j) \in \E(H)} \Delta(A_j,A_i) = \lambda (G \setminus \{0\})$$ 
holds.
\end{definition}

It is clear that examples exist with non-disjoint sets; e.g. for even $m$, any star-defined $(n,m/2+1,l,\lambda; K_{m/2,1})$-EDF $(A_0,\ldots,A_{m/2-1}; A_{m/2})$ is an example of an adjacent-disjoint CEDF, namely the $(n,m,l,\lambda; C_m^*)$-EDF given by $(B_0,\ldots, B_{m-1})$ where $B_i=A_i$ for even $i$ and $B_i=A_{m/2}$ for odd $i$.  We give an example of such an adjacent-disjoint CEDF for which we believe that a standard CEDF with the same parameters does not exist in the same group.  In $\mathbb{Z}_{19}$, there is an $(19,4,3,2; C_4^*)$-EDF - which is also a $(19,3,3,2;K_{2,1})$-EDF - with set sequence $(A_0,A_1,A_2,A_3)$ where $A_0=\{1,5,16\}$, $A_1=A_3=\{2,8,11\}$ and $A_2=\{4,10,17\}$.  However, computer search using GAP does not find a standard CEDF in $\mathbb{Z}_{19}$ with the same parameters.  We ask whether there are families of adjacent-disjoint EDFs which can be shown to achieve parameters not achievable by EDFs with disjoint sets.

It would also be possible to consider a version of digraph-defined EDFs in which set-sizes are not required to be equal, analogous to the generalisation of EDFs to GEDFs and SEDFs to GSEDFs.

Finally, the concept of equivalence has been considered for EDFs, SEDFs and (in this paper) for CEDFs. We ask whether it is of interest to consider equivalence in the context of more general graph- and digraph-defined EDFs.

\end{section}

\subsection*{Acknowledgements}
We thank Maura Paterson for helpful comments.  We thank the anonymous referees for their feedback, which greatly improved the exposition of the paper.

\end{document}